\renewcommand{\leq}{\leqslant}
\renewcommand{\geq}{\geqslant}
\newcommand{\ptl}{\partial}
\newcommand{\rr}{{\mathbb{R}}}
\newcommand{\rrn}{\mathbb{R}^{n+1}}
\newcommand{\la}{\lambda}
\newcommand{\sph}{\mathbb{S}}
\newcommand{\sub}{\subset}
\newcommand{\escpr}[1]{\big<#1\big>}
\newcommand{\Sg}{\Sigma} 
\newcommand{\sg}{\sigma}
\newcommand{\Om}{\Omega}
\newcommand{\eps}{\varepsilon}
\newcommand{\ric}{\text{Ric}}
\newcommand{\ind}{\mathcal{Q}}
\newcommand{\eee}{\mathcal{E}}
\newcommand{\indo}{\mathcal{I}}
\newcommand{\fff}{\mathcal{F}}
\newcommand{\ddd}{\mathcal{D}}
\newcommand{\ele}{\mathcal{L}}
\newcommand{\va}{\vartheta}
\DeclareMathOperator{\divv}{div}
\DeclareMathOperator{\argcosh}{argcosh}
\newtheorem{theorem}{Theorem}[section]
\newtheorem{proposition}[theorem]{Proposition}
\newtheorem{lemma}[theorem]{Lemma}
\theoremstyle{definition}
\newtheorem{remark}[theorem]{Remark}
\newtheorem{remarks}[theorem]{Remarks}
\newtheorem{example}[theorem]{Example}
\newtheorem{examples}[theorem]{Examples}
\theoremstyle{remark}
\numberwithin{equation}{section}
\begin{document}

\title[Capillarity and isoperimetry for radial weights in a ball]{Stable capillary hypersurfaces and the partitioning problem in balls with radial weights}

\author[C\'esar Rosales]{C\'esar Rosales}
\address{Departamento de Geometr\'{\i}a y Topolog\'{\i}a and Excellence Research Unit ``Modeling Nature'' (MNat) Universidad de Granada, E-18071,
Spain.} 
\email{crosales@ugr.es}

\date{\today}

\thanks{The author was supported by the research grant PID2020-118180GB-I00 funded by MCIN/AEI/10.13039/501100011033 and the Junta de Andaluc\'ia grant PY20-00164.} 

\subjclass[2020]{49Q20, 53A10} 

\keywords{Weighted manifolds, radial weights, capillary hypersurfaces, partitioning problem, stability}

\begin{abstract}
In a round ball $B\subset\rrn$ endowed with an $O(n+1)$-invariant metric we consider a radial function that weights volume and area. We prove that a compact two-sided hypersurface in $B$ which is stable capillary in weighted sense and symmetric about some line containing the center of $B$ is homeomorphic to a closed $n$-dimensional disk. When combined with Hsiang symmetrization and other stability results this allows to deduce that the interior boundary of any isoperimetric region in $B$ for the Gaussian weight is a closed $n$-disk of revolution. For $n=2$ we also show that a compact weighted stable capillary surface in $B$ of genus 0 is a closed disk of revolution.
\end{abstract}

\maketitle

\thispagestyle{empty}

\section{Introduction}
\label{sec:intro}
\setcounter{equation}{0}

The \emph{partitioning problem} inside an open round ball $B\subset\rr^{n+1}$ seeks those sets in $B$ minimizing the \emph{relative perimeter} for a given volume (we recall that, for a set $E\subset B$, the contribution of $\ptl E\cap\ptl B$ is not taken into account when computing the relative perimeter). It is well known that, for any solution $E$ to this problem, the interior boundary $\Lambda:=\overline{\ptl E\cap B}$ is either a spherical cap meeting $\ptl B$ orthogonally, or an equatorial disk in $B$. This was first proved by Burago and Maz'ya~\cite[Lem.~9 in p.~54]{burago-mazya} and later also by Bokowski and Sperner~\cite[Sect.~2]{bokowski-sperner}, who employed spherical symmetrization and the isometries of $B$. We refer the reader to \cite[Thm.~5]{rosisoperimetric} for a nice exposition of their arguments.

A closely related and much more involved problem is the description of \emph{stable free boundary hypersurfaces} in $B$, i.e., compact second order minima of the interior area for fixed volume while having non-empty boundary contained in $\ptl B$. This started with the work of Ros and Vergasta~\cite{ros-vergasta}, who established some partial results for characterizing the orthogonal spherical caps and the totally geodesic disks as the unique stable hypersurfaces. By relying on these results, Barbosa~\cite{barbosa} and Nunes~\cite{nunes2} solved the problem for $n=2$. In arbitrary dimension, the desired classification was obtained by Wang and Xia~\cite[Thm.~3.1]{wang-xia} by using the stability condition with a clever test function associated to a conformal Killing vector field in $\rrn$. As a matter of fact, they were able to describe all compact, two-sided, \emph{stable capillary hypersurfaces} in $B$ after previous work of Ros and Souam~\cite{ros-souam}, Marinov~\cite{marinov}, and Li and Xiong~\cite{li-xiong2}. We recall that \emph{capillary hypersurfaces} in $B$ are those with constant mean curvature and making constant angle $\theta$ with $\ptl B$ (for $\theta=\pi/2$ we get the free boundary case).

In this work we study isoperimetric regions and stable capillary hypersurfaces in round balls with \emph{radial weights}. So, we consider a smooth positive function $e^\psi$, only depending on the distance from the center of $B$, to weight the Euclidean volume and relative perimeter of sets, as that as the area of hypersurfaces. It is worth mentioning that most of the classical differential operators and curvature notions in Riemannian geometry have a weighed counterpart, which allows a proper treatment of variational questions into this context.

As in the unweighted setting, standard compactness arguments in geometric measure theory provide existence of weighted isoperimetric regions in $B$. However, unlike the Euclidean case, \emph{the spherical caps meeting $\ptl B$ orthogonally do not necessarily bound weighted minimizers in $B$}. This is easy to see for the Gaussian weight, where such hypersurfaces are not even critical points of the area for fixed volume, see Remarks~\ref{re:exotic}. This shows us that other geometric shapes different from the totally umbilical ones appear as isoperimetric boundaries in round balls with radial weights. Motivated by this phenomenon, our aim in this paper is to deduce some relevant geometric and topological properties of any weighted minimizer $E$. In Theorem~\ref{th:isop} we prove that
\begin{quotation}
``\emph{If the regular part of $\Lambda:=\overline{\ptl E\cap B}$ is connected, then $\Lambda$ is a smooth hypersurface, symmetric about a line passing through the center of $B$, and homeomorphic either to a closed $n$-dimensional disk $($if $\ptl\Lambda\neq\emptyset$$)$ or to an $n$-dimensional sphere $($if $\ptl\Lambda=\emptyset$$)$}''.
\end{quotation}
This means, in particular, that the isoperimetric boundaries in $B$ have the simplest possible topology. We remark that weighted minimizers bounded by spheres of revolution could appear because a minimizer in $B$ need not meet $\ptl B$. An example of this situation in a Riemannian manifold with non-empty boundary is found after Remark 2.5 in \cite{cones}. In the unweighted case this theorem is combined with the classification of constant mean curvature hypersurfaces of revolution to conclude that $\Lambda$ is totally umbilical, i.e., it is a spherical cap meeting $\ptl B$ orthogonally or an equatorial disk. Unfortunately, an analogous description for arbitrary radial weights in $\rrn$ is still unknown.

The proof of Theorem~\ref{th:isop} goes as follows. The symmetry of $\Lambda$ about a line $L$ comes by adapting the symmetrization technique of Hsiang~\cite{hsiang} to our setting. After this, the smoothness of $\Lambda$ is consequence of the regularity results for weighted minimizers, see \cite[Sect.~3.10]{morgan-reg}, \cite[Sects.~2.2, 2.3]{milman}, together with the fact that an area-minimizing tangent cone to $\Lambda$ at any point in $L$ must be totally geodesic. Note also that the rotational symmetry implies that $\Lambda$ is topologically an ($n$-dimensional) cylinder, disk, torus or sphere. To complete the proof we invoke the stability result in Theorem~\ref{th:stablerot2} to rule out cylinders and tori as isoperimetric boundaries. 

The topological restriction in Theorem~\ref{th:stablerot2} is essentially a particular case of a more general property for \emph{weighted stable capillary hypersurfaces} in $B$. These were introduced and studied by Li and Xiong~\cite{li-xiong}. Similarly to the Riemannian context, they have constant \emph{weighted mean curvature} and meet $\ptl B$ making constant angle $\theta$, see Section~\ref{subsec:capillary1} for details. The free boundary case $\theta=\pi/2$ was first discussed by Castro and the author~\cite{castro-rosales}. In Theorem~\ref{th:stablerot} we prove the following: 
\begin{quotation}
``\emph{If $\Sg$ is a compact, connected, two sided, weighted stable capillary hypersurface in $B$ with $\ptl\Sg\neq\emptyset$ and symmetric about a line $L$ passing through the center of $B$, then $\Sg$ if homeomorphic to a closed $n$-dimensional disk}''.
\end{quotation}
For the proof we use in a natural way the symmetries of the ball $B$ and the weight $e^\psi$. First, we take a rotations vector field $X$ in $\rrn$ and check that, for any capillary hypersurface $\Sg\subset B$ with unit normal $N$, the associated function $w:=\escpr{X,N}$ solves a PDE problem for the weighted Laplacian $\Delta_{\Sg,\psi}$ with a Neumann boundary condition (Lemma~\ref{lem:killing}). Thus, we can apply a weighted version of the Courant's nodal domain theorem (Proposition~\ref{prop:courant}) to deduce that, if $\Sg$ is also stable, then $w=0$ or has at most two nodal domains. On the other hand, when $\Sg$ is topologically a cylinder, we are able to find $X$ such that $w$ has at least four nodal domains. This allows to conclude that $\Sg$ cannot be a cylinder and must be homeomorphic to a disk. We point out that the test function $w$ was previously utilized by Hutchings, Morgan, Ritor\'e and Ros~\cite{bubble} for solving the double bubble conjecture in $\rr^3$, see also Ros and Vergasta~\cite{ros-vergasta}, Ros and Souam~\cite{ros-souam}, Al\'ias, L\'opez and Palmer~\cite{alias-lopez-palmer} and Ros~\cite{ros-willmore}. By means of other test functions, Li and Xiong gave in \cite[Sect.~4]{li-xiong} different instability criteria in Euclidean balls with radial weights $e^{\psi(r)}$ such that $\psi''\leq 0$. 

For a capillary surface $\Sg$ in a ball $B\subset\rr^3$ it is possible to estimate the number $m$ of nodal domains for some non-vanishing function $w=\escpr{X,N}$ in terms of the genus $g$ of $\Sg$. This is done in Theorem~\ref{th:stablezero}, where we prove that $m\geq 3-2g$. From this we easily infer the next statement for radial weights in $B$ with non-negative Bakry-\'Emery-Ricci curvature:
\begin{quotation}
``\emph{If $\Sg$ is a compact, connected, two sided, weighted stable capillary hypersurface in $B$ with $\ptl\Sg\neq\emptyset$ and genus zero, then $\Sg$ if a closed disk of revolution}''.
\end{quotation}
This extends to a weighted setting a result and proof of Ros and Souam~ \cite[Thm.~2.2]{ros-souam}. They generalized an argument of Ros and Vergasta~\cite{ros-vergasta} for the unweighted free boundary case relying on the Gauss-Bonnet formula and the structure of the nodal set of $w$ described by Cheng~\cite{cheng}. The theorem may be seen as a converse of Theorem~\ref{th:stablerot}, in the sense that a topological hypothesis (genus zero) entails not only a topological restriction (connected boundary) but also a geometric conclusion (rotational symmetry). 

An interesting weight where our results apply is the Gaussian one. We must observe that minimizers and stable area-stationary hypersurfaces in half-spaces, slabs, and convex cylinders of Gauss space have been studied by many authors, see for instance \cite{rosales-gauss}, \cite{rosales-cylinders} and the references therein. For a Gaussian ball, we remark that a compact area-stationary hypersurface with empty boundary cannot be stable. This comes since a complete, two-sided, stable area-stationary hypersurface in Gauss space having empty boundary and finite area must be a hyperplane, see McGonagle and Ross~\cite{mcgonagle-ross}. For the unit ball we provide in Proposition~\ref{prop:unit} a direct and original proof of this instability statement. When this is combined with Theorem~\ref{th:isop}, we obtain that
\begin{quotation}
``\emph{In a Gaussian ball, the interior boundary of any isoperimetric region is a smooth closed $n$-dimensional disk symmetric about a line containing the center of the ball}''.
\end{quotation}
As we already mentioned, the spherical caps meeting $\ptl B$ orthogonally are not critical points for the partitioning problem in a Gaussian ball. However, for any minimizer $E\subset B$ with vanishing weighted mean curvature, its interior boundary $\overline{\ptl E\cap B}$ is an equatorial disk of $B$. This is consequence of a stability result of Li and Xiong~\cite[Thm.~1]{li-xiong} for radial weights. In Proposition~\ref{prop:unit} we show a different proof of this fact which follows the ideas of Ros and Vergasta in~\cite[Thm.~6]{ros-vergasta}. 

The techniques employed in this paper allow to consider $O(n+1)$-invariant metrics in $\rrn$ and weights only depending on the Riemannian distance from the center of $B$. In particular, the results are valid for geodesic balls in simply connected space forms of any curvature and dimension. Moreover, since most of the arguments rely on the symmetries of the ball, these can be used to analyze  capillary hypersurfaces \emph{outside a round ball}, as that as weighted minimizers in the whole space $\rrn$ with an $O(n+1)$-invariant metric and radial weight. This includes the case of Euclidean space with radial weight, where isoperimetric regions were previously studied by Morgan and Pratelli~\cite{morgan-pratelli} and Chambers~\cite{chambers}, among other authors. We finish this work by showing how our methods are also useful to discuss the partitioning problem in Riemannian cylinders with horizontal weights, see Theorem~\ref{th:cylinders}.

The paper contains four sections. In Section~\ref{sec:prelimi} we introduce the notation and establish a weighted version of the Courant's nodal domain theorem for solutions to certain elliptic problems. In Section~\ref{sec:capillary} we review some facts about capillary hypersurfaces and obtain geometric and topological consequences of the stability condition. Finally, in Section~\ref{sec:isop} we analyze weighted minimizers in round balls with radial weights, and deduce the topological classification of their interior boundaries for Gaussian balls.

\section{Preliminaries}
\label{sec:prelimi}
\setcounter{equation}{0}

In this section we introduce notation and review some properties of the solutions to certain elliptic problems that will be applied later for capillary hypersurfaces inside weighted manifolds. We have organized the content into three subsections.

\newpage
\subsection{Weighted manifolds}
\label{subsec:weighted}
\noindent 

A \emph{weighted manifold} is a complete oriented manifold $M^{n+1}$, possibly with smooth boundary $\ptl M$, together with a Riemannian metric $g:=\escpr{\cdot\,,\cdot}$, and a smooth positive function $e^\psi$. We denote by $\text{int}(M)$ the set $M\setminus\ptl M$ and by $|\cdot|$ the norm of tangent vectors in $M$. The function $e^\psi$ is used to weight the Hausdorff measures associated to the Riemannian distance in $(M,g)$. In particular, the \emph{weighted volume} of a Borel set $E$ and the \emph{weighted area} of a hypersurface $\Sg$ in $M$ are defined by
\begin{equation}
\label{eq:volarea}
V_\psi(E):=\int_{E}dv_\psi, \quad A_\psi(\Sg):=\int_\Sg da_\psi,
\end{equation}
where $dv_\psi:=e^\psi\,dv$ and $da_\psi:=e^\psi\,da$ are the weighted elements of volume and area, respectively. For an $(n-1)$-dimensional submanifold $C\subset M$ we consider the weighted measure
\[
L_\psi(C):=\int_C dl_\psi,
\]
where $dl_\psi:=e^\psi\,dl$ and $dl$ is the $(n-1)$-dimensional Hausdorff measure in $(M,g)$. For the constant weight $\psi=0$ we recover the corresponding measures in $(M,g)$. 

Most of the differential operators and curvature notions in Riemannian geometry have a weighted counterpart, which combines the classical definitions with the derivatives of the weight. For instance, the \emph{weighted divergence} \cite[p.~68]{gri-book} of a $C^1$ vector field $X$ on $M$ is given by
\begin{equation}
\label{eq:divf}
\divv_\psi X:=e^{-\psi}\divv(e^\psi\,X)=\divv X+\escpr{\nabla\psi,X},
\end{equation}
where $\divv$ is the usual divergence in $(M,g)$ and $\nabla\psi$ stands for the Riemannian gradient of $\psi$. The \emph{weighted Laplacian} is introduced in \cite[Sect.~3.6]{gri-book} as the second order elliptic linear operator
\begin{equation}
\label{eq:flaplacian}
\Delta_\psi u:=\divv_\psi\nabla u=\Delta u+\escpr{\nabla\psi,\nabla u},
\end{equation}
where $\Delta$ is the Laplacian operator in $(M,g)$ and $u$ is a $C^2$ function on $M$. On the other hand, the most natural generalization of the Ricci tensor in $(M,g)$ is the \emph{Bakry-\'Emery-Ricci tensor}, see \cite[p.~182]{gmt} and the references therein, which is the $2$-tensor
\begin{equation}
\label{eq:fricci}
\ric_\psi:=\ric-\nabla^2\psi,
\end{equation}
where $\nabla^2$ stands for the Hessian operator in $(M,g)$. We recall that $\ric$ is the $2$-tensor $\ric(X,Y):=\text{trace}(u\mapsto R(X,u)Y)$, where $R$ is the curvature tensor as defined in \cite[Sect.~4.2]{dcriem}. The \emph{Bakry-\'Emery-Ricci curvature} at a point $p\in M$ in the direction of a tangent vector $X\in T_pM$ is the number $(\ric_\psi)_p(X,X)$. The notation $\ric_\psi\geq 0$ means that $(\ric_\psi)_p(X,X)\geq 0$ for any $p\in M$ and $X\in T_pM$. 

A smooth hypersurface $\Sg\sub M$, possibly with smooth boundary $\ptl\Sg$, becomes a weighted manifold with respect to the induced metric $g_{|\Sg}$ and the restriction of $e^\psi$ to $\Sg$. We will denote the weighted volume and area elements in $\Sg$ by $da_\psi$ and $dl_\psi$, respectively. We will add the subscript $\Sg$ to distinguish the gradient and the weighted Laplacian $\nabla$ and $\Delta_\psi$ in $M$ from the ones $\nabla_\Sg$ and $\Delta_{\Sg,\psi}$ in $\Sg$. For a $C^1$ vector field on $\Sg$, tangent or not to $\Sg$, the \emph{weighted divergence} is
\begin{equation}
\label{eq:divf2}
\divv_{\Sg,\psi}X:=\divv_\Sg X+\escpr{\nabla\psi,X},
\end{equation}
where $\divv_\Sg X$ is the Riemannian divergence of $X$ relative to $\Sg$. When $X$ is tangent to $\Sg$ this coincides with the definition in \eqref{eq:divf} when we see $\Sg$ as a weighted manifold. In particular, we have
\begin{equation}
\label{eq:sglaplacian}
\Delta_{\Sg,\psi}u=\divv_{\Sg,\psi}\nabla_\Sg u=\Delta_\Sg u+\escpr{\nabla_\Sg\psi,\nabla_\Sg u},
\end{equation}
for any $C^2$ function $u$ on $\Sg$.

\subsection{Weighted Sobolev functions}
\label{subsec:sobolev}
\noindent 

The contents of this subsection and \ref{subsec:nodal} will be applied in Section~\ref{sec:capillary} to capillary hypersurfaces. By this reason, though we consider \emph{arbitrary weighted manifolds}, we will follow the notation for hypersurfaces introduced above.

Let $\Sg$ be a Riemannian manifold, possibly with smooth boundary $\ptl\Sg$, and with weight $e^\psi$. We denote by $L_\psi^2(\Sg)$ the space of those functions $u:\Sg\to\rr$ such that $u^2$ is integrable with respect to $da_\psi$. The associated $L_\psi^2$-norm is given by $\|u\|_{L_\psi^2}:=\left(\int_\Sg u^2\,da_\psi\right)^{1/2}$. The \emph{weighted Sobolev space} $H_\psi^1(\Sg)$ consists of the functions $u\in L_\psi^2(\Sg)$ having a distributional gradient $\nabla_{\Sg,\psi} u$ satisfying $|\nabla_{\Sg,\psi} u|\in L_\psi^2(\Sg)$. The tangent vector field $\nabla_{\Sg,\psi} u$ on $\Sg$ is characterized by equality
\[
\int_\Sg u\,\divv_{\Sg,\psi} X\,da_\psi=-\int_\Sg\escpr{\nabla_{\Sg,\psi} u,X}\,da_\psi,
\]
for any tangent $C^1$ vector field $X$ with compact support inside $\text{int}(\Sg):=\Sg\setminus\ptl\Sg$. The associated $H_\psi^1$-norm is defined by
\[
\|u\|_{H_\psi^1}:=\left(\int_\Sg u^2\,da_\psi+\int_\Sg|\nabla_{\Sg,\psi} u|^2\,da_\psi\right)^{1/2}.
\]
For the constant weight $\psi=0$ we will use the notation $L^2(\Sg)$ and $H^1(\Sg)$ for the corresponding spaces. We refer the reader to \cite[Sect.~4.1]{gri-book} and \cite[Ch.~2]{hebey} for basic facts about Sobolev spaces in weighted and Riemannian manifolds. 

From now on we suppose that $\Sg$ is compact. Note that $L_\psi^2(\Sg)=L^2(\Sg)$ because $\alpha\leq e^\psi\leq\beta$ for some constants $\alpha,\beta>0$. Moreover, by taking into account the identity
\begin{equation}
\label{eq:silly}
\divv_{\Sg,\psi} X\,da_\psi=\divv_{\Sg}(e^\psi X)\,da
\end{equation} 
for any tangent $C^1$ vector field $X$, we deduce that $H_\psi^1(\Sg)=H^1(\Sg)$ and $\nabla_{\Sg,\psi} u=\nabla_\Sg u$ for any $u\in H_\psi^1(\Sg)$. We also infer that the norms $\|\cdot\|_{L_\psi^2}$ and $\|\cdot\|_{L^2}$ (respectively $\|\cdot\|_{H_\psi^1}$ and $\|\cdot\|_{H^1}$) are equivalent, so that they have the same convergent sequences.

Recall that, when $\ptl\Sg\neq\emptyset$, the \emph{trace operator} is a continuous linear map $T:H^1(\Sg)\to L^2(\ptl \Sg)$ such that $T(u)=u_{|\ptl\Sg}$ if $u$ is continuous on $\Sg$, and the next equality holds 
\[
\int_\Sg u\,\divv_\Sg X\,da=-\int_\Sg\escpr{\nabla_\Sg u,X}\,da-\int_{\ptl\Sg}T(u)\,\escpr{X,\nu}\,dl,
\]
where $\nu$ is the inner conormal along $\ptl\Sg$, $u\in H^1(\Sg)$ and $X$ is any $C^1$ tangent vector field on $\Sg$, see Theorem 1 in \cite[Sect.~4.3]{evans-gariepy}. As usual, we will simply denote $T(u)=u$. Again from \eqref{eq:silly}, we get
\[
\int_\Sg u\,\divv_{\Sg,\psi} X\,da_\psi=-\int_\Sg\escpr{\nabla_\Sg u,X}\,da_\psi-\int_{\ptl\Sg}u\,\escpr{X,\nu}\,dl_\psi,
\]
for any  $u\in H_\psi^1(\Sg)$ and any $C^1$ tangent vector field $X$ on $\Sg$. From here and \eqref{eq:sglaplacian} we derive the integration by parts formula
\begin{equation}
\label{eq:ibp}
\int_\Sg u\,\Delta_{\Sg,\psi} w\,da_\psi=-\int_\Sg\escpr{\nabla_\Sg u,\nabla_\Sg w}\,da_\psi-\int_{\ptl\Sg}u\,\frac{\ptl w}{\ptl\nu}\,dl_\psi,
\end{equation}
which is valid for any $u\in H^1_\psi(\Sg)$ and $w\in C^2(\Sg)$. When $\ptl\Sg=\emptyset$ the previous formulas are satisfied by adopting the convention that all the integrals along $\ptl\Sg$ vanish.

\subsection{Nodal domains for solutions to some weighted elliptic problems}
\label{subsec:nodal}
\noindent 

Let $\Sg$ be a compact Riemannian manifold, possibly with smooth boundary $\ptl\Sg$, and with weight $e^\psi$. We denote
\[
\fff_\psi(\Sg):=\left\{u\in H^1_\psi(\Sg)\,;\,\int_\Sg u\,da_\psi=0\right\}, \quad \fff^\infty_\psi(\Sg):=\left\{u\in C^\infty(\Sg)\,;\,\int_\Sg u\,da_\psi=0\right\}.
\]
By using that $C^\infty(\Sg)$ is dense in $H^1_\psi(\Sg)$ it is easy to see that $\mathcal{F}^\infty_\psi(\Sg)$ is dense in $\mathcal{F}_\psi(\Sg)$. 

Fix two functions $q\in C^\infty(\Sg)$ and $b\in C^\infty(\ptl\Sg)$. For any $\la\in\rr$ we consider the following problem for the weighted Laplacian in $\Sg$ with a Neumann boundary condition
\begin{equation}
\label{eq:problem}
(P_\la)\quad
\begin{cases}
\Delta_{\Sg,\psi} w+q\,w=\la\,\,&\text{in\, $\Sg$}, \vspace{0,05cm}
\\ 
\displaystyle\frac{\ptl w}{\ptl\nu}+b\,w=0\,\,\,&\text{in\, $\partial\Sg$}.
\end{cases}
\end{equation}
For a solution $w\in\fff^\infty_\psi(\Sg)$ of ($P_\la$), equation~\eqref{eq:ibp} entails that $\ind_\psi(u,w)=0$ for any $u\in\fff_\psi(\Sg)$, where $\ind_\psi:\fff_\psi(\Sg)\times\fff_\psi(\Sg)\to\rr$ is the symmetric bilinear form given by
\begin{equation}
\label{eq:if1}
\ind_\psi(u,w):=\int_\Sg\left\{\escpr{\nabla_\Sg u,\nabla_\Sg w}-q\,u\,w\right\}da_\psi-\int_{\ptl \Sg}b\,u\,w\,dl_\psi.
\end{equation}
Hence, if $w\in\fff^\infty_\psi(\Sg)$ solves ($P_\la$) for some $\la\in\rr$, then $w\in\text{rad}(\ind_\psi)$ (the radical of $\ind_\psi$). Conversely, any function $w\in\text{rad}(\ind_\psi)$ is a solution of ($P_\la$) for some $\la\in\rr$. Indeed, by elliptic regularity, we infer that $w\in\fff_\psi^\infty(\Sg)$ (we can argue as in \cite[Cor.~7.3]{gri-book} for interior regularity, see \cite[Ch.~IV]{mijailov} for boundary regularity); thus, we can employ again \eqref{eq:ibp} to get
\begin{equation}
\label{eq:if2}
\int_\Sg\left(\Delta_{\Sg,\psi} w+q\,w\right) u\,da_\psi+\int_{\ptl\Sg}\left(\frac{\ptl w}{\ptl\nu}+b\,w\right) u\,dl_\psi=-\ind_\psi(u,w),
\end{equation}
which vanishes for any $u\in\fff_\psi(\Sg)$. Now, standard arguments entail that $w$ solves ($P_\la$) for some $\la\in\rr$. For future reference we also observe that $\ind_\psi(u,w)$ is well defined for any $u,w\in H^1_\psi(\Sg)$.

Let $w\in C^\infty(\Sg)$, $w\neq 0$, be a function such that $\Delta_{\Sg,\psi}w+q\,w=\la$. A \emph{nodal domain} of $w$ is any connected component of the set $\Sg\setminus w^{-1}(0)$. By reasoning as in the proof of the Courant's nodal domain theorem~\cite[p.~19]{eigenvalues} we can establish the following result.

\begin{proposition}
\label{prop:courant}
Let $\Sg$ be a compact and connected Riemannian manifold with weight $e^\psi$. Suppose that, for some $q\in C^\infty(\Sg)$ and $b\in C^\infty(\ptl\Sg)$, the bilinear form defined in \eqref{eq:if1} satisfies $\ind_\psi(u,u)\geq 0$ for any $u\in\fff^\infty_\psi(\Sg)$. If a function $w\in C^\infty(\Sg)$ solves the problem $(P_0)$ in \eqref{eq:problem}, then $w=0$ or $w$ has at most two nodal domains.
\end{proposition}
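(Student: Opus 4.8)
The plan is to adapt the classical proof of Courant's nodal domain theorem to the present weighted Neumann setting, the only subtlety being that the admissible space is $\fff^\infty_\psi(\Sg)$ (mean-zero functions), so one must be careful that all test functions constructed along the way also have vanishing weighted integral. Suppose, for contradiction, that $w$ solves $(P_0)$ and has at least three nodal domains $\Om_1,\Om_2,\Om_3$. First I would introduce, for $i=1,2,3$, the function $w_i$ defined to equal $w$ on $\Om_i$ and $0$ elsewhere; since $w$ vanishes continuously on $w^{-1}(0)$ and lies in $C^\infty(\Sg)$, each $w_i$ belongs to $H^1_\psi(\Sg)$, with $\nabla_\Sg w_i = \nabla_\Sg w$ on $\Om_i$ and $0$ a.e. on the complement. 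The key computation is that $\ind_\psi(w_i,w_i)=0$ for each $i$: integrating by parts on $\Om_i$ (using \eqref{eq:ibp} applied on the subdomain, with the boundary decomposed into the part of $\ptl\Sg$ inside $\overline{\Om_i}$, where the Neumann condition $\ptl w/\ptl\nu = -bw$ holds, and the nodal boundary part, where $w_i=0$), and then invoking $\Delta_{\Sg,\psi}w + qw = 0$, one finds that all terms collapse and $\ind_\psi(w_i,w_i)=0$.

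Next I would use the three-dimensional span to enforce the mean-zero constraint. Consider $v := c_1 w_1 + c_2 w_2 + c_3 w_3$; choosing $(c_1,c_2,c_3)\neq 0$ so that $\int_\Sg v\, da_\psi = 0$ is one linear condition on three unknowns, hence solvable, giving $v\in\fff_\psi(\Sg)$, $v\neq 0$ (the $w_i$ have disjoint supports, so $v$ cannot vanish identically unless all $c_i=0$). Since the supports $\overline{\Om_i}$ overlap only on $w^{-1}(0)$, where every $w_i$ vanishes, the cross terms drop out and $\ind_\psi(v,v) = \sum_i c_i^2\, \ind_\psi(w_i,w_i) = 0$. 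By density of $\fff^\infty_\psi(\Sg)$ in $\fff_\psi(\Sg)$ and continuity of $\ind_\psi$ on $H^1_\psi(\Sg)\times H^1_\psi(\Sg)$, the hypothesis $\ind_\psi(u,u)\geq 0$ extends to all $u\in\fff_\psi(\Sg)$; combined with $\ind_\psi(v,v)=0$ this forces $v\in\mathrm{rad}(\ind_\psi)$. Indeed, for any $u\in\fff_\psi(\Sg)$ and $t\in\rr$ we have $0\leq \ind_\psi(v+tu,v+tu) = 2t\,\ind_\psi(u,v) + t^2\,\ind_\psi(u,u)$, and letting $t\to 0^\pm$ gives $\ind_\psi(u,v)=0$.

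Then I invoke the characterization of $\mathrm{rad}(\ind_\psi)$ established just before the proposition: since $v\in\mathrm{rad}(\ind_\psi)$, elliptic regularity upgrades $v$ to $\fff^\infty_\psi(\Sg)$, so in particular $v\in C^\infty(\Sg)$. But $v$ vanishes identically on the nonempty open set $\Om_4 := \Sg\setminus(\overline{\Om_1}\cup\overline{\Om_2}\cup\overline{\Om_3})$ when $w$ has more than three nodal domains, or more robustly: $v$ is a smooth solution of $\Delta_{\Sg,\psi}v + qv = \la$ for some constant $\la$, while on the nonempty open region $\Om_2$ (if $c_2\neq 0$, WLOG after relabelling so that some $c_i\neq 0$; pick that one, say $\Om_1$ with $c_1\neq 0$) we have $v = c_1 w$, hence $\la = \Delta_{\Sg,\psi}v + qv = c_1(\Delta_{\Sg,\psi}w + qw) = 0$ there, so $\la = 0$ everywhere. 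Now $v$ is a smooth function vanishing on a nonempty open set (the interior of any $\Om_j$ with $c_j = 0$, which exists since at most we used all three but the mean-zero condition on a $3$-space with three disjoint-support pieces and only two effectively independent sign patterns forces a nontrivial relation — more carefully: if all $c_j\neq 0$, then $v$ vanishes on the $(n-1)$-dimensional nodal set $w^{-1}(0)\cap\ptl\Om_j$ with all derivatives, contradicting the unique continuation principle for the elliptic operator $\Delta_{\Sg,\psi} + q$, since $\nabla_\Sg v = 0$ on $w^{-1}(0)$ as well, as $v$ has a strict local extremum there from each adjacent domain). Either way, unique continuation gives $v\equiv 0$, the desired contradiction.

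The main obstacle I anticipate is the final contradiction step, making precise why $v$ must vanish on an open set (or vanish together with its gradient on a hypersurface, which unique continuation also excludes): one must argue that on the shared boundary $w^{-1}(0)$ the function $v$ and its normal derivative both vanish. This is clear when some $c_j = 0$ (then $v\equiv 0$ on $\Om_j$, an open set), and this case always occurs because one can choose the mean-zero vector $(c_1,c_2,c_3)$ to have a zero component: the map $(c_1,c_2,c_3)\mapsto\int_\Sg(c_1w_1+c_2w_2+c_3w_3)\,da_\psi$ is a nonzero linear functional (its coefficients are $\int_{\Om_i}w\,da_\psi$, not all zero — if some were zero one could drop the corresponding $w_i$ and get a mean-zero nontrivial element with two-element support instead), and its kernel is a $2$-plane meeting each coordinate hyperplane $\{c_j=0\}$ nontrivially; picking $v$ in the kernel with, say, $c_3=0$ but $(c_1,c_2)\neq 0$ yields a nonzero $v\in\mathrm{rad}(\ind_\psi)$ vanishing identically on $\Om_3$. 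Smoothness of $v$ plus $\la=0$ plus vanishing on the nonempty open set $\mathrm{int}(\Om_3)$ then contradicts the unique continuation property of $\Delta_{\Sg,\psi}+q$ on the connected manifold $\Sg$, completing the proof.
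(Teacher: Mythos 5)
Your argument is correct and is essentially the paper's proof: given three nodal domains you build a mean-zero combination of the truncations of $w$ supported on only two of them, show it lies in $\mathrm{rad}(\ind_\psi)$ via the non-negativity of $\ind_\psi$ extended by density to $\fff_\psi(\Sg)$, upgrade it by elliptic regularity to a smooth solution of $\Delta_{\Sg,\psi}v+q\,v=0$, and kill it by unique continuation since it vanishes on the third (open) nodal domain while being nonzero on the first. The only caveat is that your parenthetical fallback for the case where all three coefficients are nonzero (claiming $v$ and $\nabla_\Sg v$ both vanish along the nodal hypersurface) is shaky and unnecessary: as you yourself observe, the kernel of the mean-value functional always contains a nonzero vector with a vanishing component, which reproduces exactly the combination $w_1+\alpha\,w_2$ used in the paper.
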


\begin{proof}
Suppose $w\neq 0$ and take a nodal domain $\ddd$ of $w$. Define $w_\ddd:=\chi_\ddd\,w$, where $\chi_\ddd$ is the characteristic function of $\ddd$ in $\Sg$. Since $w=0$ in $\ptl\ddd\cap\text{int}(\Sg)$ it follows that $w_\ddd$ is a continuous function in $H^1(\Sg)=H^1_\psi(\Sg)$ with $\nabla_\Sg w_\ddd=\nabla_\Sg w$ in $\ddd$, see \cite[Lem.~1, p.~21]{eigenvalues} for details. Note that
\begin{align*}
\ind_\psi(w_\ddd,w_\ddd)&=\int_\Sg\left(|\nabla_\Sg w_\ddd|^2-q\,w_\ddd^2\right)da_\psi-\int_{\ptl\Sg}b\,w^2_\ddd\,dl_\psi
\\
&=\int_\Sg\left(\escpr{\nabla_\Sg w,\nabla_\Sg w_\ddd}-q\,w\,w_\ddd\right)da_\psi-\int_{\ptl\Sg}b\,w\,w_\ddd\,dl_\psi
\\
&=-\int_\Sg w_\ddd\left(\Delta_{\Sg,\psi} w+q\,w\right)da_\psi-\int_{\ptl\Sg}w_\ddd\left(\frac{\ptl w}{\ptl\nu}+b\,w\right)dl_\psi=0,
\end{align*}
where we have used equation \eqref{eq:ibp} and that $w$ solves ($P_0$).

Suppose that $\ddd_1$ and $\ddd_2$ are different nodal domains of $w$. Denote $w_i:=w_{\ddd_i}$ for $i=1,2$. The fact that $\int_\Sg w_i\,da_\psi\neq 0$ allows to find a constant $\alpha\neq 0$ such that $\overline{w}:=w_1+\alpha\,w_2$ is a continuous function in $\fff_\psi(\Sg)$. It is clear that $\ind_\psi(\overline{w},\overline{w})=0$ because $\ind_\psi(w_i,w_i)=0$ for any $i=1,2$ and $\ddd_1\cap\ddd_2=\emptyset$. Take any function $u\in\fff_\psi(\Sg)$. By using the non-negativity hypothesis about $\ind_\psi$ and that $\fff^\infty_\psi(\Sg)$ is dense in $\fff_\psi(\Sg)$ for the weighted Sobolev norm $\|\cdot\|_{H^1_\psi}$, we obtain
\[
0\leq\ind_\psi(u+t\,\overline{w},u+t\,\overline{w})=\ind_\psi(u,u)+2\,\ind_\psi(u,\overline{w})\,t, \quad\text{for any }t\in\rr.
\]
This entails that $\ind_\psi(u,\overline{w})=0$. Since $u$ is arbitrary then $\overline{w}\in\text{rad}(\ind_\psi)$. As a consequence, $\overline{w}\in\fff_\psi^\infty(\Sg)$ and $\overline{w}$ solves the problem ($P_\la$) for some $\la\in\rr$. Note that $\la=0$ because $\overline{w}=w$ on $\ddd_i$. 

Finally, in case there were at least three different nodal domains $\ddd_i$ with $i=1,2,3$, we could construct a function $\overline{w}$ as above such that $\overline{w}=0$ on $\ddd_3$. As $\Delta_{\Sg,\psi}\overline{w}+q\,\overline{w}=0$ on $\Sg$, we would conclude from the unique continuation principle for elliptic partial differential equations in \cite{aronszajn} that $\overline{w}=0$ in $\Sg$. This is a contradiction because $\overline{w}=w\neq 0$ in $\ddd_i$ with $i=1,2$. The proof is completed.
\end{proof}

\begin{remark}
\label{re:nobound}
The previous result also holds when $\ptl\Sg=\emptyset$. In this case the arguments are simpler because the Neumann boundary condition disappears and the integrals along $\ptl\Sg$ vanish.
\end{remark}

\section{Stable capillary hypersurfaces}
\label{sec:capillary}
\noindent

In this section we study capillary hypersurfaces in weighted manifolds. We introduce them in the first subsection together with some useful results. The reader is referred to the paper of Li and Xiong~\cite[Sect.~2.2]{li-xiong}, which extends the free boundary case discussed by Castro and the author~\cite[Sect.~3]{castro-rosales}, and follows closely the exposition of Ros and Souam~\cite[Sect.~1]{ros-souam} for the unweighted case. In the second subsection we establish geometric and topological properties of stable capillary hypersurfaces in balls with radial weights.

\subsection{Weighted capillarity}
\label{subsec:capillary1}
\noindent 

Let $\Om$ be a smooth domain with boundary $\ptl\Om$ in an oriented Riemannian manifold $M^{n+1}$ with weight $e^\psi$. We take a compact two-sided hypersurface $\Sg\subset\overline{\Om}$ with smooth boundary $\ptl\Sg$ such that $\Sg\cap\ptl\Om=\ptl\Sg$ (this implies that $\text{int}(\Sg)\subseteq\Om$). We suppose that $\ptl\Sg$ bounds a relatively compact open set $D\subset\ptl\Om$, and that there is a bounded open set $E\subset\Om$ satisfying $\ptl E=\Sg\cup D$. 

Let $X$ be a complete smooth vector field on $M$ which is tangent along $\ptl\Om$. We denote $\Sg_t:=\phi_t(\Sg)$, $D_t:=\phi_t(D)$ and $E_t:=\phi_t(E)$, where $\{\phi_t\}_{t\in\rr}$ is the flow of diffeomorphisms associated to $X$. Clearly we have $\Sg_t\cap\ptl\Om=\ptl\Sg_t$, $D_t\subseteq\ptl\Om$, $\ptl D_t=\ptl\Sg_t$ and $\ptl E_t=\Sg_t\cup D_t$, for any $t\in\rr$. The deformation $\{E_t\}_{t\in\rr}$ is \emph{volume-preserving} if the weighted volume functional $t\mapsto V_\psi(E_t)$ is constant for any $t$ small enough. For a fixed contact angle $\theta\in(0,\pi)$ we define an energy functional $\eee_{\psi}:\rr\to\rr$ by 
\[
\eee_{\psi}(t):=A_\psi(\Sg_t)-(\cos\theta)\,A_\psi(D_t),
\]
where $A_\psi$ is the weighted area of hypersurfaces in \eqref{eq:volarea}. We say that $\Sg$ is a \emph{$\psi$-capillary hypersurface} if $\eee_\psi'(0)=0$ for any volume-preserving deformation. If, in addition, $\eee_\psi''(0)\geq 0$ for any volume-preserving deformation, then $\Sg$ is \emph{$\psi$-stable} or \emph{$\psi$-stable capillary}. 

\begin{remark}[The free boundary case]
\label{re:freebound}
For contact angle $\theta=\pi/2$ we have $\eee_\psi(t)=A_\psi(\Sg_t)$, which does not involve $\ptl\Sg$ nor $D_t$. So, we do not need that $\ptl\Sg$ bounds a set $D$ in $\ptl\Om$. In this context we also allow hypersurfaces $\Sg\subset\Om$ with $\ptl\Sg=\emptyset$. For them we adopt the convention that the conditions involving $\ptl\Sg$ are empty and the integrals along $\ptl\Sg$ vanish. Anyway, the critical points of $A_\psi(\Sg_t)$ for fixed weighted volume are called \emph{weighted area-stationary hypersurfaces}. The corresponding $\psi$-stable ones provide the second order candidates to solve the partitioning problem in $\Om$, see Section~\ref{sec:isop}. 
\end{remark}

By using the first and second variation formulas for the weighted volume and the functional $\eee_\psi$, we can deduce the following result~\cite[Sect.~2.2]{li-xiong}.

\begin{lemma}
\label{lem:varprop}
Under the conditions previously stated, we have
\begin{itemize}
\item[(i)]  $\Sg$ is $\psi$-capillary if and only if $H_\psi$ is constant and $\Sg$ intersects $\ptl\Om$ making constant angle $\theta$.
\item[(ii)] A $\psi$-capillary hypersurface $\Sg$ is $\psi$-stable if and only if $\indo_\psi(u,u)\geq 0$ for any $u\in\fff^\infty_\psi(\Sg)$.
\end{itemize}
\end{lemma}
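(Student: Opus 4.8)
The plan is to derive both statements from the first and second variation formulas for the weighted volume $V_\psi$ and the weighted energy $\eee_\psi$. Fix a complete smooth vector field $X$ on $M$ tangent along $\ptl\Om$, with associated flow $\{\phi_t\}$, and split $X=u\,N+X^\top$ along $\Sg$, where $u:=\escpr{X,N}$ and $X^\top$ is tangent to $\Sg$. From $\frac{d}{dt}\big|_{t=0}(\phi_t^*\,dv_\psi)=\divv_\psi X\,dv_\psi$ and the weighted divergence theorem one gets $V_\psi'(0)=\int_\Sg u\,da_\psi$. Likewise, using $\frac{d}{dt}\big|_{t=0}(\phi_t^*\,da_\psi)=\divv_{\Sg,\psi}X\,da_\psi$, the splitting of $X$, the identity $\divv_{\Sg,\psi}(u\,N)=-u\,H_\psi$ and integration by parts for the tangential part $X^\top$, one arrives at
\[
A_\psi'(0)=-\int_\Sg u\,H_\psi\,da_\psi+\int_{\ptl\Sg}\escpr{X,\nu}\,dl_\psi,
\]
where $\nu$ is the inner conormal of $\ptl\Sg$ in $\Sg$. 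Since $X$ is tangent to $\ptl\Om$, the analogous computation on $D\subset\ptl\Om$ yields $\frac{d}{dt}\big|_{t=0}A_\psi(D_t)=\int_{\ptl\Sg}\escpr{X,\bar\nu}\,dl_\psi$, with $\bar\nu$ the outer conormal of $D$ in $\ptl\Om$ along $\ptl D=\ptl\Sg$. Therefore
\[
\eee_\psi'(0)=-\int_\Sg u\,H_\psi\,da_\psi+\int_{\ptl\Sg}\escpr{X,\bar\nu}\,\big(\escpr{\nu,\bar\nu}-\cos\theta\big)\,dl_\psi,
\]
where I have used that $X\perp\bar N$ and $\nu\perp T(\ptl\Sg)$ force $\escpr{X,\nu}=\escpr{X,\bar\nu}\escpr{\nu,\bar\nu}$ on $\ptl\Sg$.

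To finish (i) I would invoke the standard Lagrange-multiplier principle: $\eee_\psi'(0)=0$ for every volume-preserving deformation is equivalent to the existence of $\Lambda\in\rr$ with $\eee_\psi'(0)=\Lambda\int_\Sg u\,da_\psi$ for \emph{every} admissible $X$; the only point needing care is the richness of the class of volume-preserving deformations, i.e., that one can realize any prescribed $u\in\fff^\infty_\psi(\Sg)$, which is done exactly as in the unweighted setting. Choosing $X$ compactly supported in $\text{int}(\Sg)$ with arbitrary normal component forces $H_\psi\equiv-\Lambda$ to be constant, and then choosing $X$ with arbitrary $\escpr{X,\bar\nu}$ along $\ptl\Sg$ forces $\escpr{\nu,\bar\nu}=\cos\theta$ on $\ptl\Sg$, i.e., $\Sg$ meets $\ptl\Om$ at the constant angle $\theta$. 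The converse is immediate from the displayed formula: if $H_\psi$ is constant and the angle condition holds, then $\eee_\psi'(0)=-H_\psi\int_\Sg u\,da_\psi=-H_\psi\,V_\psi'(0)=0$ along any volume-preserving deformation.

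For (ii), assuming $\Sg$ is $\psi$-capillary I would differentiate the first variation once more along a volume-preserving deformation. At a critical point the second variation depends only on $u=\escpr{X,N}$, so one may take a normal variation; a Jacobi-type computation — using the weighted Gauss equation to combine the Riemannian Ricci term with the Hessian of $\psi$ into $\ric_\psi(N,N)$, and expanding the variation of the boundary integrand $\escpr{X,\bar\nu}(\escpr{\nu,\bar\nu}-\cos\theta)$ with the help of the fixed-angle condition — gives
\[
\eee_\psi''(0)=\int_\Sg\big(|\nabla_\Sg u|^2-(|\sg|^2+\ric_\psi(N,N))\,u^2\big)\,da_\psi-\int_{\ptl\Sg}q_\psi\,u^2\,dl_\psi=\indo_\psi(u,u),
\]
where $|\sg|^2$ is the squared norm of the second fundamental form of $\Sg$ and $q_\psi$ is the capillary boundary potential built from the second fundamental forms of $\ptl\Om$ and $\Sg$ along $\ptl\Sg$ and from $\theta$; this is precisely the index form $\indo_\psi$ of \cite{li-xiong}. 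Since, up to the same approximation argument, volume-preserving deformations correspond to admissible fields with $\int_\Sg u\,da_\psi=0$, i.e., to $u\in\fff^\infty_\psi(\Sg)$, the inequality $\eee_\psi''(0)\geq 0$ along all volume-preserving deformations is equivalent to $\indo_\psi(u,u)\geq 0$ for every $u\in\fff^\infty_\psi(\Sg)$. I expect the main obstacle to be not conceptual but computational: correctly identifying the boundary potential $q_\psi$, which is where all the sign conventions and the geometry of $\ptl\Om$ enter; the analytic inputs (the Lagrange-multiplier richness and the density of $\fff^\infty_\psi(\Sg)$ in $\fff_\psi(\Sg)$ recorded earlier) are routine.
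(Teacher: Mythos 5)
The paper offers no proof of this lemma at all: it simply invokes the first and second variation formulas of Li and Xiong \cite[Sect.~2.2]{li-xiong}, and your outline is precisely the standard derivation of those formulas (first variation plus Lagrange multiplier for (i), normal-variation second variation plus the density/realizability of mean-zero normal components for (ii)), so the approach is the same. The only quibbles are sign conventions --- with $N$ pointing into $E$ one gets $V_\psi'(0)=-\int_\Sg u\,da_\psi$, and the inner conormal produces a minus sign in the boundary term of $A_\psi'(0)$; these cancel out of the final characterization --- and the explicit form of the boundary potential in $\eee_\psi''(0)$, which you correctly identify as the one genuinely computational step and which is exactly what the cited reference supplies.
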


Let us clarify the notation in the preceding lemma. The symbol $H_\psi$ stands for the \emph{weighted mean curvature} of $\Sg$ computed with respect to the unit normal $N$ pointing into $E$. This function was introduced by Gromov~\cite[Sect.~9.4.E]{gromov-GAFA}, see also Bayle~\cite[Sect.~3.4.2]{bayle-thesis}, by means of equality
\begin{equation}
\label{eq:fmc}
H_\psi:=-\divv_{\Sg,\psi}N=nH-\escpr{\nabla\psi,N},
\end{equation} 
where $H$ is the Riemannian mean curvature of $\Sg$. The fact that $\Sg$ intersects $\ptl\Om$ making angle $\theta$ means that $\escpr{\eta,N}=\cos\theta$ along $\ptl\Sg$, where $\eta$ is the inner unit normal along $\ptl\Om$. On the other hand, $\indo_\psi:\fff_\psi(\Sg)\times\fff_\psi(\Sg)\to\rr$ is the \emph{weighted index form}, i.e., the symmetric bilinear form given by
\begin{equation}
\label{eq:index1}
\begin{split}
\indo_\psi(u,w)&:=\int_\Sg\left\{\escpr{\nabla_\Sg u,\nabla_\Sg w}-\big(\ric_\psi(N,N)+|\sigma|^{2}\big)\,u\,w\right\}da_{\psi} 
\\
&-\int_{\ptl\Sg}\left\{(\csc\theta)\,\text{II}(\overline{\nu},\overline{\nu})+(\cot\theta)\,\sg(\nu,\nu)\right\}u\,w\,dl_\psi.
\end{split}
\end{equation}
Here $\nabla_\Sg$ is the distributional gradient in $\Sg$ for the induced metric, $\ric_\psi$ denotes the Bakry-\'Emery-Ricci tensor of $M$ in \eqref{eq:fricci}, $\overline{\nu}$ is the inner conormal of $\ptl\Sg$ in $D$, $\sg$ is the second fundamental form of $\Sg$, and $\text{II}$ stands for the second fundamental form of $\ptl\Om$. Clearly $\indo_\psi$ coincides with the bilinear form $\ind_\psi$ defined in \eqref{eq:if1} when we take $q:=\ric_\psi(N,N)+|\sg|^2$ on $\Sg$ and $b:=(\csc\theta)\,\text{II}(\overline{\nu},\overline{\nu})+(\cot\theta)\,\sg(\nu,\nu)$ along $\ptl\Sg$. In the free boundary case $\theta=\pi/2$ we have $\overline{\nu}=N$, so that $b=\text{II}(N,N)$. 

Note that $\indo_\psi(u,w)$ is well defined for arbitrary functions $u,w\in H^1_\psi(\Sg)$. In the particular situation where $u\in H_\psi^1(\Sg)$ and $w\in C^2(\Sg)$, equation~\eqref{eq:if2} implies that
\begin{equation}
\label{eq:index2}
\mathcal{I}_\psi(u,w)=-\int_\Sg u\,\mathcal{L}_\psi w\,da_\psi
-\int_{\ptl\Sg}u\left[\frac{\ptl w}{\ptl\nu}+\left\{(\csc\theta)\,\text{II}(\overline{\nu},\overline{\nu})+(\cot\theta)\,\sg(\nu,\nu)\right\}w\right]dl_\psi,
\end{equation}
where $\mathcal{L}_\psi$ is the \emph{weighted Jacobi operator of $\Sg$}, i.e., the second order linear operator
\begin{equation}
\label{eq:jacobi}
\mathcal{L}_\psi w:=\Delta_{\Sg,\psi}w+\left(\text{Ric}_\psi(N,N)+|\sg|^2\right)w.
\end{equation}

The two differential operators appearing in \eqref{eq:index2} have a geometric meaning. While $\ele_\psi$ provides the derivative of the weighted mean curvature along the variation, the first order operator in the boundary term coincides with the derivative of the angle that $\Sg_t$ makes with $\ptl\Om$. These interpretations lead to the following lemma, where we derive some properties for the normal component of a Killing vector field (the infinitesimal generator of a one-parameter group of Riemannian isometries).

\begin{lemma}
\label{lem:killing}
Let $\Om$ be a smooth domain of a Riemannian manifold $M$ with weight $e^\psi$. Suppose that $X$ is a Killing vector field on $M$, which is tangent to $\ptl\Om$, and has a one-parameter flow $\{\phi_t\}_{t\in(-\eps,\eps)}$ such that $\psi\circ\phi_t=\psi$ for any $t\in(-\eps,\eps)$. If $\Sg\subset\overline{\Om}$ is a compact, connected and two-sided $\psi$-capillary hypersurface, then the function $w:=\escpr{X,N}$ satisfies:
\begin{itemize}
\item[(i)] $\Delta_{\Sg,\psi} w+(\emph{Ric}_\psi(N,N)+|\sg|^2)\,w=0$ on $\Sg$,
\vspace{0,1cm}
\item[(ii)] $\displaystyle\frac{\ptl w}{\ptl\nu}+\big\{(\csc\theta)\,\emph{II}(\overline{\nu},\overline{\nu})+(\cot\theta)\,\sg(\nu,\nu)\big\}w=0$ along $\ptl\Sg$.
\end{itemize}
Moreover, if $\Sg$ is also $\psi$-stable, then $w=0$ or $w$ has at most two nodal domains. 
\end{lemma}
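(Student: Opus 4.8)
The plan is to verify the three assertions of Lemma~\ref{lem:killing} by combining the variational interpretation of the operators in \eqref{eq:index2} with the invariance hypotheses on $X$ and $\psi$. The key observation is that the flow $\{\phi_t\}$ of $X$ consists of Riemannian isometries preserving $\ptl\Om$ (since $X$ is tangent to $\ptl\Om$) and preserving the weight ($\psi\circ\phi_t=\psi$); hence each $\phi_t$ is an isometry of the weighted manifold $(M,g,e^\psi)$ fixing $\ptl\Om$. Therefore the deformation $\Sg_t:=\phi_t(\Sg)$ is by $\psi$-capillary hypersurfaces with the \emph{same} constant weighted mean curvature $H_\psi$ and the \emph{same} contact angle $\theta$ as $\Sg$, and it is automatically volume-preserving because $\phi_t$ preserves $V_\psi$. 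The normal component of the variation field $X$ along $\Sg$ is exactly $u:=\escpr{X,N}$, and (up to the usual reparametrization, which does not affect the statements) this is the function $w$ in the lemma.

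For part (i): the first variation of the weighted mean curvature along a normal deformation with speed $w$ is given by $\ele_\psi w$, i.e., by the weighted Jacobi operator in \eqref{eq:jacobi}; this is precisely the content of the remark following \eqref{eq:jacobi} (``$\ele_\psi$ provides the derivative of the weighted mean curvature along the variation''). Since every $\Sg_t$ has the same constant $H_\psi$, the derivative $\frac{d}{dt}\big|_{t=0}H_\psi(\Sg_t)$ vanishes identically on $\Sg$, and hence $\ele_\psi w=\Delta_{\Sg,\psi}w+(\ric_\psi(N,N)+|\sg|^2)\,w=0$. For part (ii): by the same remark, the first-order boundary operator $\frac{\ptl w}{\ptl\nu}+\{(\csc\theta)\,\text{II}(\overline\nu,\overline\nu)+(\cot\theta)\,\sg(\nu,\nu)\}\,w$ equals the derivative at $t=0$ of the angle function $t\mapsto\escpr{\eta,N_t}$ along $\ptl\Sg_t$; since this angle is constantly $\theta$ for all $t$, its derivative is zero, which gives (ii). Alternatively, one can read off both (i) and (ii) from \eqref{eq:index2}: because $\Sg_t$ are all critical points of $\eee_\psi$ with the same Lagrange multiplier, the function $w$ lies in the kernel of the associated Jacobi problem, which is exactly the system in (i)--(ii).

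The final assertion is then immediate. Parts (i) and (ii) say precisely that $w\in C^\infty(\Sg)$ solves the problem $(P_0)$ in \eqref{eq:problem} with $q:=\ric_\psi(N,N)+|\sg|^2$ and $b:=(\csc\theta)\,\text{II}(\overline\nu,\overline\nu)+(\cot\theta)\,\sg(\nu,\nu)$, for which the associated bilinear form $\ind_\psi$ coincides with the weighted index form $\indo_\psi$. If $\Sg$ is $\psi$-stable, Lemma~\ref{lem:varprop}(ii) gives $\indo_\psi(u,u)\geq 0$ for all $u\in\fff^\infty_\psi(\Sg)$, so the hypothesis of Proposition~\ref{prop:courant} holds; applying that proposition (and Remark~\ref{re:nobound} when $\ptl\Sg=\emptyset$) yields that $w=0$ or $w$ has at most two nodal domains.

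The main technical point to be careful about — rather than a genuine obstacle — is the reparametrization issue: the variation $t\mapsto\phi_t$ generated by a Killing field need not be a purely normal variation, so one must invoke the standard fact that the first variation of $H_\psi$ and of the contact angle depend only on the normal component $w=\escpr{X,N}$ of the variation field, together with the fact that a reparametrized family still consists of $\psi$-capillary hypersurfaces with the same constants. Once this is granted, everything reduces to ``differentiating a constant is zero,'' and no further computation is needed beyond citing the variational formulas of \cite{li-xiong} already recalled in this section. One should also note that $w$ is genuinely smooth on all of $\Sg$ (including $\ptl\Sg$), since $N$ extends smoothly to the boundary and $X$ is smooth on $M$, so that $(P_0)$ makes classical sense and Proposition~\ref{prop:courant} applies directly.
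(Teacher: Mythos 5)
Your proposal is correct and follows essentially the same route as the paper: both arguments exploit that the flow of $X$ consists of weight-preserving isometries fixing $\ptl\Om$, so that $H_\psi$ and the contact angle are constant along the deformation, and then read (i) and (ii) off from the variational interpretation of $\ele_\psi$ and of the boundary operator (the paper cites the explicit formulas \eqref{eq:totoro} and the angle-derivative identity from \cite{castro-rosales} and \cite{ros-souam} for exactly this purpose). The final claim is obtained in both cases by combining Lemma~\ref{lem:varprop}~(ii) with Proposition~\ref{prop:courant}, and your remark on the tangential part of the variation is precisely the point the paper handles by working with $(H_\psi)_t(\phi_t(p))$.
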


\begin{proof}
By Lemma~\ref{lem:varprop} (i) we know that $\Sg$ has constant weighted mean curvature and meets $\ptl\Om$ along $\ptl\Sg$ making constant angle $\theta$. From Equation~(3.5) in \cite{castro-rosales}, for any $p\in\Sg$ we know that
\begin{equation}
\label{eq:totoro}
\big(\Delta_{\Sg,\psi} w+(\ric_\psi(N,N)+|\sg|^2)\,w\big)(p)=(\mathcal{L}_\psi w)(p)=\frac{d}{dt}\bigg|_{t=0}(H_\psi)_t(\phi_t(p)), 
\end{equation}
where $(H_\psi)_t$ is the weighted mean curvature of the hypersurface $\Sg_t:=\phi_t(\Sg)$. So, to prove (i) it suffices to see that $(H_\psi)_t(\phi_t(p))=H_\psi(p)$ for any $t\in(-\eps,\eps)$. Since $\phi_t$ is an isometry of $M$ we can define a unit normal $N_t$ along $\Sg_t$ by setting $N_t(\phi_t(p)):=(d\phi_t)_p(N(p))$ for any $p\in\Sg$. By differentiating the equality $\psi\circ\phi_t=\psi$ in the direction of $N(p)$, we obtain
\[
\escpr{(\nabla\psi)(\phi_t(p)),N_t(\phi_t(p))}=\escpr{(\nabla\psi)(p),N(p)},
\]
for any $t\in(-\eps,\eps)$. By taking into account \eqref{eq:fmc} and that the Riemannian mean curvature is preserved under isometries we conclude that $H_\psi(\phi_t(p))=H_\psi(p)$, as we claimed.  The proof of (ii) does not involve the weight and it comes by using that $\phi_t$ is an isometry of $M$ together with the identity
\[
\frac{d}{dt}\bigg|_{t=0}\escpr{\eta,N_t}(\phi_t(p))=-(\sin\theta)\left[\frac{\ptl w}{\ptl\nu}+\big\{(\csc\theta)\,\text{II}(\overline{\nu},\overline{\nu})+(\cot\theta)\,\sg(\nu,\nu)\big\}\,w\right](p),
\]
which is achieved by following the calculus in the proof of \cite[Lem.~4.1]{ros-souam}, see also \cite[Lem.~3.3]{li-xiong2}. The last statement in the lemma is a direct consequence of Lemma~\ref{lem:varprop} (ii) and Proposition~\ref{prop:courant}.
\end{proof}

\begin{remark}
Equality (i) in the preceding lemma holds for any hypersurface $\Sg$ with constant weighted mean curvature. Equality (ii) is valid for hypersurfaces meeting $\ptl\Om$ at a constant angle $\theta$.
\end{remark}

We finish this subsection with an easy application of Lemma~\ref{lem:killing} to an interesting situation.

\begin{example}[Capillary hypersurfaces in cylinders with horizontal weights]
\label{ex:cylinders}
Consider a Riemannian product $\Om\times\rr$, where $\Om$ is a smooth relatively compact domain of a Riemannian manifold, endowed with a weight $e^{\psi(x,s)}:=e^{h(x)}$. When $h=0$ we recover the unweighted setting. We denote by $\xi$ the vertical vector field on $\Om\times\rr$ defined by $\xi(x,s):=(0,1)\in T_x\Om\times\rr$. This is a Killing vector field since the associated one-parameter group of diffeomorphisms consists of the vertical translations $\phi_t(x,s):=(x,s+t)$. It is clear that $\xi$ is tangent along $\ptl\Om\times\rr$ and that $\psi\circ\phi_t=\psi$ for any $t\in\rr$. 

Let $\Sg\subset\overline{\Om}\times\rr$ be a compact two-sided hypersurface separating a bounded open set $E$ in $\Om\times\rr$. Define the \emph{angle function} on $\Sg$ by $\vartheta:=\escpr{\xi,N}$. Note that $\vartheta\neq 0$. Otherwise, $\xi$ would be tangent to $\Sg$ and so, $\Sg$ would be foliated by vertical lines, which contradicts its compactness. Observe also that
\[
\int_\Sg\vartheta\,da_\psi=-\int_E\divv_\psi\xi\,dv_\psi=-\int_E\big(\!\divv\xi+\escpr{\nabla\psi,\xi}\big)\,dv_\psi=0,
\]
where we have used the divergence theorem, equation~\eqref{eq:divf}, the fact that $\xi$ is parallel on $\Om\times\rr$ and the horizontality of the weight. This implies that $\vartheta$ cannot be $\geq 0$ nor $\leq 0$ on $\Sg$, so that it has at least two nodal domains where it changes its sign. Moreover, when $\Sg$ is connected and $\psi$-stable capillary, then Lemma~\ref{lem:killing} ensures that $\vartheta$ has exactly two nodal domains. This is a geometric property of $\psi$-stable capillary hypersurfaces. Indeed, since the horizontal projection $\pi:\Sg\to\Om$ defined by $\pi(x,s):=x$ is a local diffeomorphism at $p\in\Sg$ if and only if $\vartheta(p)\neq 0$, this might suggest that a $\psi$-stable capillary hypersurface is the union of two graphs over a horizontal set $A\subset\overline{\Om}\times\{s_0\}$ that meet along $\ptl A$.
\end{example}

\subsection{Results for balls with radial weights}
\label{subsec:capillary2}
\noindent 

In this subsection we obtain symmetry and topological properties for stable capillary hypersurfaces by applying Lemma~\ref{lem:killing} to suitable rotations vector fields. This requires the ambient weighted manifold to satisfy certain conditions that we now introduce.

For any subgroup $G$ of diffeomorphisms in $\rrn$, a Riemannian metric $g$ is said to be \emph{$G$-invariant} is any $\varphi\in G$ is an isometry of $(\rrn,g)$. Suppose that $g$ is $SO(n+1)$-invariant (as usual, $SO(n+1)$ denotes the subgroup of $O(n+1)$ given by all direct isometries fixing $0$ with respect to the Euclidean metric). When $n\geq 2$, any straight line $L\subset\rrn$ with $0\in L$ is the fixed point set of a family of maps in $SO(n+1)$; this yields that $L$ is a totally geodesic curve in $(\rrn,g)$, see for instance \cite[Prop.~24 in p.~145]{petersen}. Thus, the geodesics in $(\rrn,g)$ leaving from $0$ parameterize straight lines and they are always length-minimizing. In particular, the Riemannian distance function $d(p)$ with respect to $0$ is smooth in $\rrn\setminus\{0\}$, and its gradient $(\nabla d)(p)$ is proportional to $p$ whenever $p\neq 0$.

In our first result we consider a compact capillary hypersurface $\Sg$ with $\ptl\Sg\neq\emptyset$ and \emph{symmetric} about a line $L$, i.e., invariant under all Euclidean isometries fixing $L$. In this situation $\Sg$ is topologically an $n$-dimensional cylinder or a disk. We prove below that the stability condition implies that $\Sg$ must be a disk. Similar arguments were previously employed by Hutchings, Morgan, Ritor\'e and Ros~\cite[Prop.~5.2]{bubble} for solving the double bubble conjecture in $\rr^3$, see also Ros~\cite[Thm.~4]{ros-willmore}.

\begin{theorem}
\label{th:stablerot}
In $\rrn$, $n\geq 2$, we consider an $SO(n+1)$-invariant Riemannian metric $g=\escpr{\cdot\,,\cdot}$ and a weight $e^\psi$ only depending on the Riemannian distance $d(p)$ with respect to $0$. Let $B\subset\rrn$ be an open round ball about $0$ and $\Sg\subset\overline{B}$ a compact, connected, two-sided hypersurface with $\ptl\Sg\neq\emptyset$. If $\Sg$ is $\psi$-stable capillary and symmetric about some line $L$ passing through $0$, then $\Sg\cap L\neq\emptyset$. As a consequence, $\Sg$ if homeomorphic to a closed $n$-dimensional disk.
\end{theorem}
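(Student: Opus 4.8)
The plan is to exploit the rotational symmetry of $\Sg$ about $L$ to manufacture a rotations Killing vector field $X$ for which the associated function $w=\escpr{X,N}$ has many sign changes unless $\Sg$ meets $L$. First I would set up coordinates so that $L$ is, say, the $x_{n+1}$-axis, and observe that since $g$ is $SO(n+1)$-invariant and $\psi$ is radial, every rotation of $\rrn$ is an isometry preserving both $B$ and $\psi$; moreover any rotation fixing the $2$-plane $\Pi$ spanned by $e_1$ and $e_{n+1}$ (equivalently, any rotation that fixes $L$ pointwise together with an extra coordinate) gives, via its infinitesimal generator, a candidate vector field. Concretely, take $X$ to be the Killing field generating rotations in a coordinate $2$-plane $\{x_i,x_{n+1}\}$ for some $i\le n$; this $X$ is tangent to $\ptl B$ (rotations preserve the sphere), satisfies $\psi\circ\phi_t=\psi$, and vanishes exactly on the $(n-1)$-plane $\{x_i=x_{n+1}=0\}$, which contains $L\cap\{x_{n+1}=0\}=\{0\}$ but not all of $L$. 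The point of the symmetry hypothesis is that $\Sg$ is invariant under the circle of rotations in a plane orthogonal to $L$, so that the nodal structure of $w$ can be read off from a lower-dimensional slice.

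The heart of the argument is the following dichotomy. Suppose $\Sg\cap L=\emptyset$. Because $\Sg$ is symmetric about $L$, it is the orbit under $SO(n)$ (rotations fixing $L$) of a ``profile'' region obtained by intersecting $\Sg$ with a half-plane $P$ bounded by $L$; that profile is an $(n{-}1)$-dimensional hypersurface-with-boundary in $P\cong\rr\times[0,\infty)$ staying away from the boundary line $\{r=0\}$, and $\Sg$ is therefore topologically $(\text{profile})\times S^{n-1}$ collapsed appropriately — in the cylinder case it is genuinely $S^{n-1}\times[0,1]$. Now choose $X$ to generate rotations in the plane spanned by a unit vector $v$ orthogonal to $L$ and the direction of $L$ itself; wait — that field fixes $L$ only at $0$. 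Instead I choose $X$ generating rotations that move points along $L$: the field whose flow rotates the $\langle v,L\rangle$-plane, so $w=\escpr{X,N}$ changes sign as one traverses $\Sg$ in the $L$-direction and also as one goes around the $S^{n-1}$ factor in the $v$-direction. The clean way to count: pick $X$ generating rotation in the $\langle e_1,e_{n+1}\rangle$-plane; then $w=\escpr{X,N}$ is, on each slice $\Sg\cap\{x_1=\text{fixed sign}\}$, controlled by the sign of a height-type function, and the $SO(n)$-symmetry forces $w$ to vanish along two full $(n{-}1)$-spheres in $\Sg$ (where $\Sg$ meets the plane $x_1=x_{n+1}=0$, split into the two ``sides'' $x_1>0$ and $x_1<0$ of the cylinder), separating $\Sg$ into at least four pieces on which $w$ has alternating signs. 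Concretely one verifies that $\Sg\cap\{x_1=0\}$ and the rotational image structure force $w^{-1}(0)$ to disconnect the cylinder $S^{n-1}\times[0,1]$ into $\ge 4$ nodal domains, using that on the profile curve the rotation field has nonzero normal component except at the two points where the profile crosses the $x_1=x_{n+1}=0$ locus.

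Given that count, the contradiction is immediate: by Lemma~\ref{lem:killing}, $w$ satisfies $(P_0)$ and, since $\Sg$ is $\psi$-stable, $w=0$ or $w$ has at most two nodal domains. But $w\not\equiv 0$ (a rotation field cannot be everywhere tangent to a compact $\Sg$ that is disjoint from its axis, since $w=0$ on all of $\Sg$ would mean $\Sg$ is a union of orbit-tubes of $X$, forcing $\ptl\Sg\subset\ptl B$ to also be $X$-invariant in a way incompatible with $\Sg$ meeting $\ptl B$ transversally at angle $\theta$ — a short separate check). Hence $w$ has exactly one or two nodal domains, contradicting the $\ge 4$ just produced. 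Therefore $\Sg\cap L\neq\emptyset$. For the final topological conclusion, once $\Sg$ meets $L$ it meets $L$ in exactly one point (two would again reintroduce a cylinder/extra-nodal-domain contradiction, or one argues directly from the rotational structure), and then $\Sg$ is the $SO(n)$-orbit of a profile arc joining that axis point to $\ptl B$, which is visibly a topological $n$-disk (the profile is an arc, its rotation is $D^n$).

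The main obstacle I expect is making the nodal-domain count rigorous at the level of the (possibly singular) rotational structure: near $\Sg\cap L$ the orbit map $S^{n-1}\to\{pt\}$ degenerates, and one must be careful that (a) the profile region really is a $1$-manifold-with-boundary, so that the cylinder-versus-disk alternative is genuinely exhaustive (this is where invoking ``$\Sg$ is topologically a cylinder or disk'' from the symmetry is used), and (b) the zero set $w^{-1}(0)$ is exactly the union of the two $(n{-}1)$-spheres claimed, with transverse vanishing, so that it cleanly separates $\Sg$ into four open pieces rather than, say, three. The transversality is handled by the unique continuation principle already cited (ruling out $w\equiv0$ on an open set) together with an explicit computation of $w$ on the cylinder using the profile parametrization; the combinatorics of ``four alternating signs'' then follows because rotating the plane $\langle e_1,e_{n+1}\rangle$ flips the sign of $x_1$-type and $x_{n+1}$-type contributions independently on the two sides of the cylinder.
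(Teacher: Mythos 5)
Your overall strategy is exactly the paper's: take the Killing field $X$ generating rotations in a $2$-plane containing $L$, apply Lemma~\ref{lem:killing} to get ``$w=\escpr{X,N}$ is zero or has at most two nodal domains'', and derive a contradiction by producing at least four nodal domains when $\Sg\cap L=\emptyset$. But the central step --- the count of four --- is not correctly executed, and the missing ingredient is essential. Your proposed zero set, ``two $(n-1)$-spheres where $\Sg$ meets the plane $x_1=x_{n+1}=0$'', does not work: that plane has codimension $2$ in $\rrn$, so its intersection with the $n$-dimensional $\Sg$ is generically $(n-2)$-dimensional and cannot disconnect $\Sg$; and even if you instead meant the cross-sectional spheres $\Sg\cap\{x_{n+1}=0\}$, two disjoint cross-sections of a topological cylinder $S^{n-1}\times[0,1]$ separate it into at most \emph{three} pieces, and $w$ has no reason to vanish there anyway. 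To reach four components the zero set must contain both a \emph{longitudinal} hypersurface and a \emph{transversal} one. The paper gets the longitudinal piece as $C=\Sg\cap\Pi$, where $\Pi$ is the hyperplane containing $L$ and orthogonal to the rotation-target direction (so $C\cong\gamma\times S^{n-2}$ in the profile coordinates); there $X$ is tangent to $\Sg$ because it points in the direction of an $L$-fixing rotation, and this is precisely where the hypothesis $\Sg\cap L=\emptyset$ is used. The transversal piece --- entirely absent from your proposal --- is the orbit sphere $S$ of a point $p_0\in\Sg$ minimizing the radial distance $d$: at such a point $N(p_0)$ is proportional to $p_0$, the same holds along the whole orbit by symmetry, and $\escpr{X(p),p}=0$ by the $SO(n+1)$-invariance of $g$, so $w=0$ on $S$. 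Only the union $C\cup S$ cuts the cylinder into the four pieces $\gamma_{\pm}\times S^{n-1}_{\pm}$; with your zero set alone you get at most two or three nodal domains, which is compatible with stability and yields no contradiction.

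Two smaller points. First, your argument that $w\not\equiv 0$ is vaguer than it needs to be: $\ptl B$ \emph{is} $X$-invariant (it is a round sphere about $0$), so ``incompatibility with the contact angle'' is not immediate; the clean argument is that $w\equiv 0$ makes $\Sg$ invariant under the $X$-rotations as well as all $L$-fixing rotations, hence under all of $SO(n+1)$, forcing $\Sg$ to be a round sphere about $0$, contradicting $\ptl\Sg\neq\emptyset$. Second, after establishing that each of the four components of $\Sg\setminus(C\cup S)$ contains a nodal domain, one still needs $w\neq 0$ on each component; the paper handles this with the unique continuation principle of \cite{aronszajn}, which you do cite, so that part of your plan is fine.
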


\begin{proof}
Choose any Euclidean orthonormal basis $B:=\{e_1,e_2,\ldots,e_{n+1}\}$ where $e_1$ generates the line $L$. Define the smooth vector field on $\rrn$ given by
\[
X(x_1,x_2,x_3,\ldots,x_{n+1}):=(-x_2,x_1,0,\ldots,0),
\]
where $(x_1,x_2,\ldots,x_{n+1})$ denote the coordinates with respect to $B$. The one-parameter flow of diffeomorphisms associated to $X$ is the family $\{\phi_t\}_{t\in\rr}$ defined as
\[
\phi_t(x_1,x_2,x_3,\ldots,x_{n+1}):=\big((\cos t)\,x_1-(\sin t)\,x_2,(\sin t)\,x_1+(\cos t)\,x_2,x_3,\ldots,x_{n+1}\big).
\]
Since $g$ is an $SO(n+1)$-invariant metric then any $\phi_t$ is an isometry of $(\rrn,g)$. It follows that $X$ is a complete Killing vector field in $(\rrn,g)$, which is tangent to the round sphere $\ptl B$. We also have that $\psi\circ\phi_t=\psi$ for any $t\in\rr$ because $\psi$ only depends on $d(p)$. 

Consider the function $w:=\escpr{X,N}$, where $N$ stands for the unit normal on $\Sg$ in $(\rrn,g)$. The $\psi$-stability of $\Sg$ entails by Lemma~\ref{lem:killing} that $w=0$ or $w$ has at most two nodal domains. The case $w=0$ is not possible. Otherwise, $X$ would be tangent to $\Sg$, so that $\Sg$ would be invariant under any $\phi_t$. Hence, the generating curve of $\Sg$ (as a hypersurface of revolution about $L$) in the plane $x_1\,x_2$ would be a circle centered at $0$ and so, $\Sg$ would be a round sphere about $0$. This contradicts that $\ptl\Sg\neq\emptyset$ and allows us to conclude that $w$ has at most two nodal domains. To finish the proof  we will see that, in case $\Sg\cap L=\emptyset$, then $w$ would have at least four nodal domains.

Consider the hyperplane $\Pi\subset\rrn$ of equation $x_2=0$. Since $L\subset\Pi$ and $\Sg$ is symmetric about $L$, we know that $C:=\Sg\cap\Pi$ is a hypersurface of $\Sg$. Let us check that $w=0$ on $C$. Take any point $p=(x_1,0,x_3,\ldots,x_{n+1})\in C$ and choose $k\in\{3,\ldots,n+1\}$ for which $x_k\neq 0$ (this is possible because we are assuming $\Sg\cap L=\emptyset$). We define the curve $\alpha:\rr\to\rrn$ by
\[
\alpha(t):=(x_1,-(\sin t)\,x_k,x_3,\ldots,x_{k-1},(\cos t)\,x_k,x_{k+1},\ldots,x_{n+1}).
\]
From the symmetry of $\Sg$ we get $\alpha(\rr)\subset\Sg$, and so $\alpha'(0)\in T_p\Sg$. It is clear by definition that $X(p)$ is proportional to $\alpha'(0)$. Hence $X(p)\in T_p\Sg$, which leads to $w(p)=\escpr{X(p),N(p)}=0$.

Next, we choose a point $p_0\in\Sg$ minimizing the distance function $d(p)$ with $p\in\Sg$. Note that $p_0\neq 0$ and $p_0\in\text{int}(\Sg)$; this implies $(\nabla_\Sg d)(p_0)=0$, and therefore $N(p_0)$ is proportional to $p_0$. Let $S\subset\text{int}(\Sg)$ be the $(n-1)$-dimensional round sphere obtained from the action over $p_0$ of the maps in $SO(n+1)$ fixing $L$. By having in mind that $\Sg$ is invariant under these maps and $g$ is $SO(n+1)$-invariant, we get that $N(p)$ is proportional to $p$, for any $p\in S$. Thus, we have
\[
w(p)=\la_p\,\escpr{X(p),p}=0.
\] 
The last equality holds for any $p\in\rrn$. This is clear when $p=0$ or $X(p)=0$. Otherwise, it comes from the $SO(n+1)$-invariance of $g$ when applied to a map $\phi\in SO(n+1)$ such that $\phi(p)=p$ and $\phi(X(p))=-X(p)$. 

All this shows that $w=0$ on $C\cup S$. Note that $\Sg\setminus(C\cup S)$ has four connected components $\Sg_i$. By the unique continuation principle \cite{aronszajn} it follows that $w\neq 0$ on any $\Sg_i$, so that any $\Sg_i$ contains a nodal domain of $w$. Hence $w$ has at least four nodal domains. This contradiction entails that $\Sg\cap L\neq\emptyset$. Finally, the topological conclusion is clear because $\Sg$ is symmetric about $L$ and $\ptl\Sg\neq\emptyset$. 
\end{proof}

\begin{remark}[The planar case]
For $n=1$ the topological conclusion in the theorem is obvious because $\Sg$ is a compact and connected curve. Observe that no hypotheses involving the metric nor the weight are required.
\end{remark}

The previous theorem applies to the free boundary case, i.e., to weighted area-stationary hypersurfaces with non-empty boundary. Thanks to Remark~\ref{re:nobound} we can reason as in the proof of Theorem~\ref{th:stablerot} to analyze also $\psi$-stable hypersurfaces with empty boundary. This leads us to the next result, that will be used in Section~\ref{sec:isop}, and implies inexistence of $\psi$-stable tori and $\psi$-stable cylinders of revolution.

\begin{theorem}
\label{th:stablerot2}
In $\rrn$, $n\geq 2$, we consider an $SO(n+1)$-invariant Riemannian metric $g$ and a weight $e^\psi$ only depending on the Riemannian distance $d(p)$ with respect to $0$. Let $B\subset\rrn$ be an open round ball about $0$ and $\Sg\subset\overline{B}$ a compact, connected, two-sided, weighted area-stationary hypersurface. If $\Sg$ is $\psi$-stable and symmetric about some line $L$ containing $0$, then $\Sg\cap L\neq\emptyset$. As a consequence, $\Sg$ is homeomorphic to a closed $n$-dimensional disk $($if $\ptl\Sg\neq\emptyset$$)$, or to an $n$-dimensional sphere $($if $\ptl\Sg=\emptyset$$)$.
\end{theorem}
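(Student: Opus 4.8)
The plan is to run the argument of Theorem~\ref{th:stablerot} for the free boundary case $\theta=\pi/2$, using Remark~\ref{re:nobound} to cover the previously excluded situation $\ptl\Sg=\emptyset$, and to add one extra ingredient, needed because a torus of revolution is harder to cut into many pieces than a cylinder of revolution. If $\ptl\Sg\neq\emptyset$, there is nothing new: this is exactly Theorem~\ref{th:stablerot} with $\theta=\pi/2$. So assume $\ptl\Sg=\emptyset$, which by the setup of Section~\ref{subsec:capillary1} forces $\Sg\subset B$. As in the proof of Theorem~\ref{th:stablerot}, I would fix a Euclidean orthonormal basis $\{e_1,\ldots,e_{n+1}\}$ with $e_1$ generating $L$, take the Killing field $X(x_1,x_2,x_3,\ldots,x_{n+1}):=(-x_2,x_1,0,\ldots,0)$ of $(\rrn,g)$, which is tangent to $\ptl B$ and whose flow preserves $\psi$, and set $w:=\escpr{X,N}$. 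By Lemma~\ref{lem:killing} together with Remark~\ref{re:nobound}, the $\psi$-stability of $\Sg$ gives that $w\equiv0$ or $w$ has at most two nodal domains.

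If $w\equiv0$ then $X$ is tangent to $\Sg$, so $\Sg$ is invariant under the flow of $X$; being also symmetric about $L$, it is then invariant under $SO(n+1)$, hence a union of round spheres about $0$, hence by connectedness a single such sphere, so $\Sg\cap L\neq\emptyset$ and we are done. It remains to rule out $\Sg\cap L=\emptyset$ when $w$ has at most two nodal domains. Suppose $\Sg\cap L=\emptyset$. Exactly as in Theorem~\ref{th:stablerot}: the hyperplane $\Pi:=\{x_2=0\}$ contains $L$, so $C:=\Sg\cap\Pi$ is a hypersurface of $\Sg$ on which $w=0$ (via the auxiliary rotation), and if $p_0\in\Sg$ minimizes the distance $d$ then $p_0\in\text{int}(\Sg)$ is a critical point of $d|_\Sg$, so $N(p_0)$ is proportional to $p_0$ and $w$ vanishes on the $(n-1)$-sphere $S_0$ obtained by acting on $p_0$ with the maps in $SO(n+1)$ fixing $L$. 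The new point is that $C\cup S_0$ is not enough: since $\ptl\Sg=\emptyset$ and $\Sg\cap L=\emptyset$, $\Sg$ is a torus of revolution $\cong S^{n-1}\times S^1$ (its profile curve is a closed curve avoiding the axis), and $\Sg\setminus(C\cup S_0)$ has only two components. So I would also take a point $p_1\in\Sg$ \emph{maximizing} $d$, obtaining in the same way an $(n-1)$-sphere $S_1$ with $w=0$ on it; since $\Sg\cap L=\emptyset$ forces $d|_\Sg$ to be non-constant, $S_0$ and $S_1$ sit over two distinct points of the profile circle. A short inspection of the product structure $\Sg\cong S^{n-1}\times S^1$ then shows that $\Sg\setminus(C\cup S_0\cup S_1)$ has four connected components, and by the unique continuation principle~\cite{aronszajn} $w$ is not identically zero on any of them, so $w$ has at least four nodal domains, a contradiction. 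Hence $\Sg\cap L\neq\emptyset$, and the topological conclusion follows at once, since $\Sg$ is then a hypersurface of revolution about $L$ meeting the axis, with boundary when $\ptl\Sg\neq\emptyset$ and without when $\ptl\Sg=\emptyset$.

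The main obstacle, relative to Theorem~\ref{th:stablerot}, is precisely this last counting step: one orbit-sphere together with the symmetry hyperplane cuts a cylinder of revolution into four pieces but a torus of revolution into only two, so one has to produce a second orbit-sphere — hence use both the minimum and the maximum of the radial distance $d$ — and then carefully verify that the two spheres together with the hyperplane cut $\Sg$ into exactly four pieces. Everything else (the equation satisfied by $w$, the now vacuous Neumann condition, the nodal-domain bound, and the vanishing of $w$ on $C$, $S_0$ and $S_1$) transfers verbatim from the proof of Theorem~\ref{th:stablerot} with the aid of Remark~\ref{re:nobound}.
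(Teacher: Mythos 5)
Your proof is correct and follows exactly the route the paper intends (the paper gives no separate proof of Theorem~\ref{th:stablerot2}, deferring to the argument of Theorem~\ref{th:stablerot} via Remark~\ref{re:nobound}). You have moreover correctly identified the one point where that argument does not transfer verbatim — for a torus of revolution the hyperplane $\Pi$ together with a single orbit sphere yields only two components, so one must also use the orbit sphere of a maximizer of $d|_\Sg$ (distinct from that of the minimizer since $d|_\Sg$ cannot be constant when $\Sg\cap L=\emptyset$) to cut $\Sg\cong\sph^{n-1}\times\sph^1$ into four pieces — and your handling of the $w\equiv 0$ case (a round sphere, which meets $L$, rather than a contradiction) is also the right adaptation.
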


Our second result illustrates that, in dimension $3$, the topology of $\Sg$ controls the number of nodal domains for a function associated to a rotations vector field. This allows to deduce that a compact stable capillary surface of genus $0$ is homeomorphic to a disk and rotationally symmetric about some line. In particular, any capillary surface of genus 0 and disconnected boundary (a capillary annulus, for instance) must be unstable. The proof follows the ideas of Ros and Souam~\cite[Thm.~2.2]{ros-souam} for the unweighted setting after previous work of Ros and Vergasta on the free boundary case~\cite[Thm.~11]{ros-vergasta}.

\begin{theorem}
\label{th:stablezero}
In $\rr^3$ we consider an $SO(3)$-invariant Riemannian metric $g=\escpr{\cdot\,,\cdot}$ together with a weight $e^\psi$ only depending on the Riemannian distance $d(p)$ with respect to $0$, and whose Bakry-\'Emery-Ricci curvature satisfies $\emph{Ric}_\psi\geq 0$. Let $B\subset\rr^3$ be an open round ball about $0$ and $\Sg\subset\overline{B}$ a compact, connected, two-sided, $\psi$-capillary surface with $\ptl\Sg\neq\emptyset$. Then, there is a rotations vector field $X$ on $\rr^3$ for which the function $w:=\escpr{X,N}$ either vanishes on $\Sg$ or verifies the inequality
\[
m\geq 3-2\,\emph{genus}(\Sg),
\] 
where $m$ is the number of nodal domains of $w$ in $\Sg$. As a consequence, if $\Sg$ is $\psi$-stable capillary and $\emph{genus}(\Sg)=0$, then $\Sg$ if a closed disk of revolution about some line containing $0$.
\end{theorem}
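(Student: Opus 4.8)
The plan is to choose a rotations vector field $X$ adapted to $\Sg$ so that the associated function $w:=\escpr{X,N}$ has as many nodal domains as the topology of $\Sg$ allows, and then to combine this count with a Gauss--Bonnet-type argument and the weighted Courant bound from Lemma~\ref{lem:killing}. First I would set up the family of rotations vector fields: for each pair of orthonormal Euclidean vectors we get a Killing field for $g$ (by $SO(3)$-invariance) that is tangent to $\ptl B$ and preserves $\psi$ (since $\psi$ is radial), so Lemma~\ref{lem:killing}(i)--(ii) applies to every such $X$, i.e.\ $w$ solves the Neumann problem $(P_0)$ for $q=\text{Ric}_\psi(N,N)+|\sg|^2$ and the appropriate boundary coefficient $b$. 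The hypothesis $\text{Ric}_\psi\geq 0$ forces $q\geq 0$, which will be what makes $w$ ``oscillate enough'': if $w$ had too few sign changes, one could build a nonnegative-sign test function violating a maximum-principle/integration-by-parts identity against $q\geq0$. Concretely, the $\psi$-stability together with Proposition~\ref{prop:courant} gives the upper bound $m\le 2$, so the whole point is to prove $m\ge 3-2\,\text{genus}(\Sg)$ for a \emph{suitably chosen} $X$ and then read off that $2\ge 3-2g$ is impossible when $g=0$ unless $w\equiv0$.

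The heart of the matter, following Ros--Souam~\cite[Thm.~2.2]{ros-souam} and Ros--Vergasta~\cite[Thm.~11]{ros-vergasta}, is the nodal-set analysis of $w$. By Cheng's structure theorem~\cite{cheng} the nodal set $w^{-1}(0)$ of a solution of a second-order elliptic equation on a surface is, away from a finite set of critical points, a union of smooth arcs, and near each critical point it looks like an equiangular system of $2k\ge 4$ arcs; moreover $w$ has a definite sign on each complementary region. I would run the Euler-characteristic bookkeeping: letting $N=w^{-1}(0)$, count vertices (interior critical points, plus boundary intersection points), edges, and faces of the cell decomposition of $\Sg$ induced by $N\cup\ptl\Sg$, and use the Neumann boundary condition (ii) together with Cheng's boundary structure to control how $N$ meets $\ptl\Sg$. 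Combining $\chi(\Sg)=2-2g-(\#\ \text{boundary components})$ with the inequalities coming from each critical point having multiplicity $\ge 2$ yields, after the standard estimate, $m\ge 3-2g$ — here one uses that the vector field $X$ can be chosen so that $\Sg$ is not invariant under its flow (else $\Sg$ would be rotationally symmetric about the axis of $X$, which is exactly the desired conclusion), hence $w\not\equiv 0$, and additionally chosen, as in the cited references, so that $w$ vanishes along a prescribed curve (the intersection of $\Sg$ with a plane through $0$, or a latitude sphere through a critical point of the distance function, as in the proof of Theorem~\ref{th:stablerot}), which is what pins the count to $3-2g$ rather than merely $1-2g$.

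Once the inequality $m\ge 3-2g$ is in hand the conclusion is immediate: if $\Sg$ is $\psi$-stable capillary with $g=\text{genus}(\Sg)=0$, then either $w\equiv 0$ for \emph{every} rotations vector field $X$, or for some $X$ we would get $m\ge 3$, contradicting the bound $m\le 2$ from Proposition~\ref{prop:courant} via Lemma~\ref{lem:killing}. So $w=\escpr{X,N}\equiv 0$ for all rotations fields $X$; geometrically this says every rotation field is everywhere tangent to $\Sg$, i.e.\ $\Sg$ is invariant under all of $SO(3)$ that fix a common line, which is only possible if $\Sg$ is a surface of revolution about a line through $0$. A compact connected surface of revolution of genus $0$ with nonempty boundary is a closed disk of revolution, giving the stated conclusion. (One must check the line is forced to pass through $0$; this follows because the only $SO(3)$-invariant directions in $(\rr^3,g)$ are the Euclidean ones through the origin, by the discussion preceding Theorem~\ref{th:stablerot}.)

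The main obstacle I anticipate is the Gauss--Bonnet / Euler-characteristic bookkeeping with the Neumann boundary condition: one has to be careful about how the nodal set meets $\ptl\Sg$ and about tangential contacts, and to verify that the particular choice of $X$ (forcing $w$ to vanish on an explicit curve) is legitimate and indeed improves the crude bound $m\ge 1-2g$ to $m\ge 3-2g$. The $\text{Ric}_\psi\ge 0$ hypothesis enters precisely to make the index-form estimate on each nodal region go through, analogously to the unweighted case; checking that the weighted index form $\indo_\psi$ and weighted Jacobi operator $\ele_\psi$ behave as in \cite{ros-souam} under this curvature assumption is routine but must be spelled out. Everything else — the existence and Killing property of the rotations fields, the PDE satisfied by $w$, and the stability-to-Courant implication — is supplied verbatim by Lemmas~\ref{lem:varprop} and \ref{lem:killing} and Proposition~\ref{prop:courant}.
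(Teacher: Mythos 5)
Your overall skeleton is right (Courant upper bound $m\le 2$ from Lemma~\ref{lem:killing} and Proposition~\ref{prop:courant}, versus a Gauss--Bonnet/Cheng lower bound $m\ge 3-2g$ for a well-chosen rotations field), but the key step --- where the ``$+2$'' improvement over the trivial bound comes from --- is justified by a mechanism that does not work here. You propose to choose $X$ so that $w$ vanishes along a prescribed curve ($\Sg\cap\Pi$ for a plane $\Pi$ through $0$, or a latitude sphere), ``as in the proof of Theorem~\ref{th:stablerot}''. That argument is only available when $\Sg$ is \emph{assumed} symmetric about a line, which is the hypothesis of Theorem~\ref{th:stablerot} but precisely what Theorem~\ref{th:stablezero} does \emph{not} assume (indeed rotational symmetry is its conclusion). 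For a general $\psi$-capillary surface there is no reason $w$ vanishes on any such curve. The paper's actual mechanism is different: one picks $p_0\in\text{int}(\Sg)$ minimizing $d(p)$, takes $X$ to be the rotation about the axis through $0$ and $N(p_0)$, and observes that $X(p_0)=0$, $w(p_0)=0$ and $(\nabla_\Sg w)(p_0)=0$, so $p_0$ is a vertex of the nodal set of order at least $2$; by Cheng's structure theorem at least two nodal lines cross equiangularly there, contributing $2\pi$ to the sum of external angles. The second $2\pi r$ comes from each boundary component $C$: the height function on $C$ has two critical points $p_1,p_2$ where $w(p_i)=0$ and, by the Neumann condition of Lemma~\ref{lem:killing}(ii), $(\ptl w/\ptl\nu)(p_i)=0$; the hypothesis $\ric_\psi\ge 0$ makes $q\ge 0$, so if $w$ had a sign near $p_i$ the Hopf boundary point lemma and unique continuation would force $w\equiv 0$. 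Hence $w$ changes sign at each $p_i$ and a nodal line enters the interior there. This is where $\ric_\psi\ge 0$ is actually used --- not in an ``index-form estimate on each nodal region'' as you suggest. Without these two inputs your Euler-characteristic bookkeeping only yields $m\ge 1-2g$, which is useless against the Courant bound.

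There is also a logical slip at the end: having ruled out $m\ge 3$, you conclude ``$w\equiv 0$ for all rotations fields $X$'' and then that $\Sg$ is invariant under all rotations fixing a common line. The correct conclusion is only that $w\equiv 0$ for the \emph{one} field $X$ constructed from $p_0$ (if $w\equiv0$ held for every rotations field, $\Sg$ would be $SO(3)$-invariant, i.e.\ a sphere about $0$, contradicting $\ptl\Sg\neq\emptyset$). Tangency of that single $X$ already makes $\Sg$ a surface of revolution about its axis $L$, and since $p_0\in\Sg\cap L$ and $\ptl\Sg\neq\emptyset$, $\Sg$ is a closed disk of revolution. The endpoint you reach is correct, but the quantifier over $X$ should be fixed.
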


\begin{proof}
Take $p_0\in\Sg$ minimizing the distance function $d(p)$ with $p\in\Sg$. As in the proof of Theorem~\ref{th:stablerot} we have that $p_0\in\text{int}(\Sg)$ and $p_0$ is proportional to $N(p_0)$, where $N$ is the unit normal to $\Sg$ in $(\rr^3,g)$. Let $B=\{e_1,e_2,e_3\}$ be a Euclidean orthonormal basis with $e_3$ parallel to $N(p_0)$. We define the vector field $X$ given in coordinates $(x,y,z)$ with respect to $B$ by $X(x,y,z):=(-y,x,0)$. This is a Killing vector field in $(\rr^3,g)$ since $g$ is $SO(3)$-invariant and the one-parameter group associated to $X$ consists of the Euclidean rotations about the line $L\subset\rr^3$ containing $0$ and $e_3$. We take the function $w:=\escpr{X,N}$ on $\Sg$. By Lemma~\ref{lem:killing} (i) we know that $\Delta_{\Sg,\psi}w+q\,w=0$ for some $q\in C^\infty(\Sg)$. We suppose that $w\neq 0$ and we see that  $m\geq 3-2\,\text{genus}(\Sg)$.

Let $\ddd_i$ be a nodal domain of $w$. From the regularity of the nodal set $w^{-1}(0)$, see Cheng \cite[Thm.~2.5]{cheng} and Remark~\ref{re:cheng} below, the boundary $\ptl\ddd_i \subset\ptl\Sg\cup w^{-1}(0)$ is a finite union of piecewise $C^2$ closed curves. Hence, we can apply the Gauss-Bonnet theorem in $\ddd_i$ to obtain
\[
\int_{\ddd_i}K\,da=2\pi\,\chi(\ddd_i)-\int_{\ptl\ddd_i}h_i\,dl-\sum_{k=1}^{n_i}\theta_k^i,
\]
where $K$ is the Gaussian curvature of $\Sg$ for the induced metric, $\chi(\ddd_i)$ is the Euler characteristic of $\overline{\ddd}_i$, $h_i$ is the geodesic curvature along the smooth arcs of $\ptl\ddd_i$, and $\{\theta_1^i,\ldots,\theta_{n_i}^i\}$ are the external angles of $\ddd_i$. Note also that the elements of area and length are the Riemannian ones. By summing up these identities for $i=1,\ldots,m$ and  taking into account that $w^{-1}(0)$ has null area, we infer
\[
\int_{\Sg}K\,da=2\pi\,\sum_{i=1}^m\chi(\ddd_i)-\int_{\ptl\Sg}h\,dl-\sum_{k=1}^{s}\theta_k,
\]
where $h$ is the geodesic curvature of $\ptl\Sg$ with respect to the inner conormal $\nu$ and $\{\theta_1,\ldots,\theta_s\}$ are the external angles associated to all the nodal domains. On the other hand, by applying directly the Gauss-Bonnet formula in $\Sg$, we get
\[
\int_{\Sg}K\,da=2\pi\,\chi(\Sg)-\int_{\ptl\Sg}h\,dl.
\]
By comparing the two previous equations and having in mind that $\chi(\ddd_i)\leq 1$ we arrive at 
\[
2\pi\,\chi(\Sg)=2\pi\,\sum_{i=1}^m\chi(\ddd_i)-\sum_{k=1}^s\theta_k\leq 2\pi m-\sum_{k=1}^s\theta_k.
\]
Since $\chi(\Sg)=2-2\,\text{gen}(\Sg)-r$, where $r$ is the number of boundary components of $\Sg$, the desired estimate $m\geq 3-2\,\text{gen}(\Sg)$ comes from the inequality above if we prove that $\sum_{k=1}^s\theta_k\geq 2\pi\,(1+r)$. 

Note that $X(p_0)=0$ because $p_0$ is proportional to $N(p_0)$ and $N(p_0)$ is parallel to $e_3$. Observe also that $\escpr{X,e_3}=0$ on $\rr^3$ because $X$ and $e_3$ are orthogonal in Euclidean sense and $g$ is $SO(3)$-invariant. So, we have $\escpr{\nabla_vX,e_3}+\escpr{X(p_0),\nabla_v e_3}=0$ for any $v\in T_{p_0}\Sg$. All this gives us $w(p_0)=0$ and $(\nabla_\Sg w)(p_0)=0$, i.e., $w$ has a zero of order at least $2$ in $p_0$. As $w\neq 0$ then $w$ has finite order at $p_0$ by the unique continuation principle \cite{aronszajn}. In particular, $p_0\in\text{int}(\Sg)$ is a vertex of the nodal set $w^{-1}(0)$ and so, there are at least two nodal lines meeting at $p_0$ in an equiangular way, see Cheng~\cite[Thm.~2.5]{cheng}. Hence, the contribution to $\sum_{k=1}^s\theta_k$ of the nodal domains $\ddd_i$ with $p_0\in\ptl\ddd_i$ is $2\pi$.

Now, we fix a connected component $C$ of $\ptl\Sg$. As $C$ is compact, the restriction to $C$ of the Euclidean height function with respect to $e_3$ has at least two critical points $p_1$ and $p_2$. It is easy to check that $X(p_i)$ is tangent to $C$, so that $w(p_i)=0$. Hence $(\ptl w/\ptl\nu)(p_i)=0$ by Lemma~\ref{lem:killing} (ii). Suppose that $w\leq 0$ on a small domain $U_i\subset\Sg$ with $p_i\in U_i$. Then, the equality in Lemma~\ref{lem:killing} (i) and the fact that $\ric_\psi\geq 0$ would imply that $\Delta_{\Sg,\psi}w\geq 0$ on $U_i$. As $(\ptl w/\ptl\nu)(p_i)=0$ we would deduce from the Hopf boundary point lemma and the maximum principle that $w=0$ on $U_i$. Thus, the unique continuation principle \cite{aronszajn} would lead to $w=0$ on $\Sg$, a contradiction. This shows that $w$ must change sign on any neighborhood of $p_i$ in $\Sg$. In particular, for any $i=1,2$, there is a nodal line of $w^{-1}(0)$ intersecting $\text{int}(\Sg)$, containing $p_i$ and separating two different nodal domains of $w$ in $\Sg$. It follows that the contribution of $C$ to $\sum_{k=1}^s\theta_k$ is at least $2\pi$. By having in mind the contribution of the nodal domains containing $p_0$ we conclude that $\sum_{k=1}^s\theta_k\geq 2\pi\,(1+r)$. This proves the inequality $m\geq 3-2\,\text{gen}(\Sg)$ in the statement.

Finally, if $\Sg$ is $\psi$-stable capillary and $\text{genus}(\Sg)=0$, then $m\geq 3$, so that $w=0$ on $\Sg$ by Lemma~\ref{lem:killing}. This means that $X$ is tangent to $\Sg$, i.e., $\Sg$ is a surface of revolution about $L$. As $p_0\in\Sg\cap L$ and $\ptl\Sg\neq\emptyset$ we conclude that $\Sg$ is homeomorphic to a closed disk.
\end{proof}

\begin{remark}
\label{re:cheng}
The structure of the nodal set $w^{-1}(0)$ for a non-trivial solution of equation $\Delta_\Sg w+q\,w=0$ was described in \cite[Thm.~2.5]{cheng}. For the proof, Cheng showed that, in a neighborhood of any vertex, the nodal set is $C^1$ diffeomorphic to the nodal set of a spherical harmonic in $\rr^2$ around the origin. This relies on a theorem of Bers~\cite[Thm.~2.1]{cheng} which is valid for more general elliptic equations. In particular, any solution to a weighted elliptic equation $\Delta_{\Sg,\psi}w+q\,w=0$ satisfies locally the conditions in Bers' theorem. Thus, the structure of the nodal set in the weighted setting is the same as in the unweighted one and can be deduced by following the original proof of Cheng.
\end{remark}

\begin{remarks}
(i). Our results also hold for $\psi$-capillary hypersurfaces outside a ball about $0$. For this case, the point $p_0$ appearing in the proofs must be chosen so that it maximizes $d(p)$.

(ii). Indeed, the arguments remain valid for hypersurfaces with empty boundary and not necessarily confined inside or outside a ball about $0$. Thus, in $\rrn$ with an $SO(n+1)$-invariant metric and a weight only depending on $d(p)$, a compact, connected, two-sided, weighted area-stationary hypersurface $\Sg$ with $\ptl\Sg=\emptyset$, which is also $\psi$-stable and symmetric about some line containing $0$, must be homeomorphic to an $n$-dimensional sphere. Moreover, for $n=2$, a $\psi$-stable area-stationary $2$-sphere must be rotationally symmetric about some line containing $0$.

(iii). In Euclidean space $\rrn$ with constant weight, the classification of constant mean curvature hypersurfaces having rotational symmetry allows to conclude in the previous theorems that $\Sg$ is a totally umbilical hypersurface. Therefore, $\Sg$ is a spherical cap or an equatorial disk when $\ptl\Sg\neq\emptyset$, whereas it coincides with a round sphere when $\ptl\Sg=\emptyset$. Unfortunately, there is no similar characterization result for arbitrary radial weights in $\rrn$.
\end{remarks}

We finish this section by showing an interesting situation where our results are applied.

\begin{example}
\label{ex:conformal}
In $\rrn$ we consider a conformal metric $g_\mu:=e^{2\mu}\,g_0$, where $\mu:\rrn\to\rr$ is a smooth radial function and $g_0$ denotes the Euclidean metric. It is clear that $g_\mu$ is $O(n+1)$-invariant and so, our results hold for those weights only depending on the distance function $d(p)$. This includes not only the Euclidean case ($\mu=0$) but also all the simply connected space forms with radial weights. More precisely, the hyperbolic space of constant curvature $-1$ is identified with the unit round ball $B\subset\rrn$ endowed with the metric $g_\mu$ obtained when $e^{2\mu(p)}:=4/(1-|p|^2)^2$. In this case the hyperbolic geodesic ball centered at $0$ of radius $r>0$ coincides with the open round ball centered at $0$ of radius $\sqrt{(1-\argcosh(r))/(1+\argcosh(r))}\in(0,1)$. On the other hand, if $\mathcal{S}$ denotes the south pole in the unit sphere $\sph^{n+1}$ of constant curvature $1$, then we can identify $\sph^{n+1}\setminus\{\mathcal{S}\}$ with $\rrn$ endowed with the metric $g_\mu$ such that $e^{2\mu(p)}:=4/(1+|p|^2)^2$. In this setting, the open geodesic ball about the north pole $\mathcal{N}$ of radius $r\in(0,\pi)$ is identified with the round ball about $0$ of radius $\sqrt{(1-\cos(r))/(1+\cos(r))}\in (0,\infty)$. The reader is referred to \cite[\S 2.3]{chavel-riem} for the details.
\end{example}

\section{The partitioning problem}
\label{sec:isop}
\setcounter{equation}{0}

In this section we study minimizers of the weighted relative perimeter for fixed weighted volume inside weighted manifolds. More precisely, we will use our previous results in Section~\ref{subsec:capillary2} to establish symmetry and topological properties of minimizers in balls endowed with Riemannian metrics invariant under Euclidean isometries and with radial weights. After that, we will apply stability arguments to deduce the topological classification of isoperimetric boundaries in the Gaussian case.

We start by introducing notation and recalling some existence and regularity results valid in arbitrary domains of weighted manifolds.

Let $\Om$ be a smooth domain of a Riemannian manifold $M^{n+1}$ with weight $e^\psi$. A \emph{weighted isoperimetric region} in $\Om$ of weighted volume $v_0\in (0,V_\psi(\Om))$ is a set $E\subset\Om$ satisfying $V_\psi(E)=v_0$ and $P_\psi(E,\Om)\leq P_\psi(E',\Om)$, for any other set $E'\subset\Om$ with $V_\psi(E')=v_0$. Here $V_\psi(E)$ denotes the weighted volume defined in \eqref{eq:volarea} and $P_\psi(E,\Om)$ is the \emph{weighted relative perimeter} given by equality
\[
P_\psi(E,\Om):=\sup\left\{\int_E\divv_\psi X\,dv_\psi\,;\,|X|\leq 1\right\},
\]
where $\divv_\psi X$ is the weighted divergence in \eqref{eq:divf} and $X$ ranges over smooth vector fields with compact support on $\Om$. By using the divergence theorem as in \cite[Thm. 9.6, Ex.~12.7]{maggi} we infer that
\begin{equation}
\label{eq:macorra}
P_\psi(E,\Om)=A_\psi(\ptl E\cap\Om),
\end{equation}
for any open set $E\subset\Om$ such that $\ptl E$ is a smooth hypersurface, up to a closed subset of volume zero. As the weighted relative perimeter does not change by sets of volume zero we can always suppose that $0<V_\psi(E\cap B)<V_\psi(B)$ for any open metric ball $B$ centered at $\ptl E\cap\Om$, see \cite[Prop.~3.1]{giusti}. 

The existence of weighted minimizers in $\Om$ is a non-trivial question. Thanks to the lower semicontinuity of $P_\psi(\cdot,\Om)$ and standard compactness arguments this is guaranteed for any weighted volume if $V_\psi(\Om)<\infty$, see~\cite[Sects.~5.5, 9.1]{gmt}, \cite[Prop.~2.2]{cvm} and \cite[Sect.~2.2]{milman}. This happens for instance when $\Om$ is a relatively compact domain of $M$. 

On the other hand, the regularity properties of weighted isoperimetric regions in $\Om$ are the same as in the unweighted case, see Morgan~\cite[Sect.~3.10]{morgan-reg}, Milman~\cite[Sect.~2.2]{milman} and the references therein. Thus, if $E$ is a weighted minimizer in $\Om$, then the \emph{interior boundary} $\Lambda:=\overline{\ptl E\cap\Om}$ is a disjoint union $\Sg\cup\Sg_0$, where $\Sg$ is a smooth embedded hypersurface, possibly with boundary $\ptl\Sg=\Sg\cap\ptl\Om$, and $\Sg_0$ is a closed set of singularities with Hausdorff dimension less than or equal to $n-7$. Moreover, at any point $p\in\Lambda$, a blow-up argument provides the existence of a closed tangent cone $C_p\sub T_pM$ which is area-minimizing in $T_pM$. Then, the points of $\Sg$ are those where $C_p$ is either a hyperplane (if $p\in\Om$) or a half-hyperplane (if $p\in\ptl\Om$), see \cite[Sect.~2.3]{milman} and the references therein.

\begin{remark}
A minimizer $E$ need not meet $\ptl\Om$, i.e., the boundary $\ptl\Sg$ could be empty. An example of this situation is found after Remark 2.5 in \cite{cones}. Note also that the condition $\ptl\Sg=\Sg\cap\ptl\Om$ prevents the existence of points of $\text{int}(\Sg)$ inside $\ptl\Om$. 
\end{remark}

Now, we are ready to state and prove the main result of this section.

\begin{theorem}
\label{th:isop}
In $\rrn$, $n\geq 2$, we consider an $O(n+1)$-invariant Riemannian metric $g=\escpr{\cdot\,,\cdot}$ and a weight $e^\psi$ only depending on the Riemannian distance $d(p)$ with respect to $0$. Let $B\subset\rrn$ be an open round ball about $0$ and $E\subset B$ a weighted isoperimetric region such that the regular part $\Sg$ of the interior boundary $\Lambda:=\overline{\ptl E\cap B}$ is connected. Then, $\Lambda$ coincides with its smooth part $\Sg$, is symmetric about some line $L$ containing $0$, and homeomorphic either to an $n$-dimensional sphere $($if $\ptl\Sg=\emptyset$$)$ or to a closed $n$-dimensional disk $($if $\ptl\Sg\neq\emptyset$$)$.
\end{theorem}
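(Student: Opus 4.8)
The strategy is to combine the Hsiang symmetrization technique, the regularity theory for weighted minimizers recalled above, and the stability results of Section~\ref{subsec:capillary2}, in that order. First I would establish the rotational symmetry of $\Lambda$. Following Hsiang~\cite{hsiang}, one shows that among all sets of the prescribed weighted volume there exists a minimizer that is symmetric about some line $L$ through $0$: starting from an arbitrary minimizer $E$, for each hyperplane $\Pi$ through $0$ one compares $P_\psi(E,B)$ with the perimeter of the two ``half-reflected'' sets obtained by doubling the better of the two pieces $E\cap\Pi^{\pm}$ across $\Pi$; since $g$ is $O(n+1)$-invariant and $\psi$ is radial, reflection in $\Pi$ is a weighted isometry of $B$, so one of these symmetrized sets is again a minimizer. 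Iterating this over a suitable one-parameter family of hyperplanes and passing to a limit produces a minimizer invariant under the full rotation group fixing some line $L$; by the hypothesis that the regular part $\Sg$ is connected, together with the regularity structure $\Lambda=\Sg\cup\Sg_0$, one concludes that $E$ itself (after the usual normalization of its topological boundary) is symmetric about $L$. This is the step I expect to require the most care: making the symmetrization argument rigorous in the BV/geometric-measure-theory framework with a weight, and correctly transferring the symmetry of the symmetrized minimizer back to the original one under the connectedness hypothesis.

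Next I would upgrade the regularity of $\Lambda$ to full smoothness. By the regularity theory quoted above, $\Lambda=\Sg\cup\Sg_0$ with $\dim\Sg_0\le n-7$, and at each point $p\in\Lambda$ there is an area-minimizing tangent cone $C_p\subset T_pM$. If $p\notin L$, the rotational symmetry forces $C_p$ to be invariant under an $SO(n)$-action with a line of fixed points in $T_pM$, hence $C_p$ splits off enough Euclidean factors that, being area-minimizing, it must be a hyperplane or half-hyperplane; thus $p\in\Sg$. If $p\in L$, the singular set $\Sg_0$ near $p$ is a rotationally symmetric closed set of dimension $\le n-7$ meeting $L$, and the tangent cone $C_p$ is an $O(n)$-invariant (or $O(n+1)$-invariant, if $p=0$) area-minimizing cone in $T_pM\cong\rrn$; such a cone is again totally geodesic, i.e.\ a hyperplane through the origin of $T_pM$, so once more $p\in\Sg$. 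Hence $\Sg_0=\emptyset$ and $\Lambda=\Sg$ is a smooth embedded hypersurface, possibly with boundary $\ptl\Sg=\Sg\cap\ptl B$.

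Finally I would read off the topology. A smooth compact connected hypersurface of revolution about a line $L$ in $\overline{B}$ is obtained by revolving its generating curve in a half-plane bounded by $L$; according to whether this curve is an arc with endpoints on $L$, an arc with no endpoint on $L$, a closed curve meeting $L$, or a closed curve disjoint from $L$, the hypersurface $\Sg$ is a disk, a cylinder, a sphere, or a torus respectively (in dimension $n$). A weighted minimizer is in particular a $\psi$-stable weighted area-stationary hypersurface (by \eqref{eq:macorra} and the definitions in Section~\ref{subsec:capillary1}, comparing with competitors obtained from volume-preserving deformations), and it is symmetric about $L$. Theorem~\ref{th:stablerot2} then rules out the cylinder and the torus: it asserts that such a $\Sg$ must satisfy $\Sg\cap L\neq\emptyset$, which excludes the cylinder, and that $\Sg$ is homeomorphic to a closed $n$-disk when $\ptl\Sg\neq\emptyset$ and to an $n$-sphere when $\ptl\Sg=\emptyset$, which excludes the torus. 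This gives exactly the stated dichotomy, completing the proof.
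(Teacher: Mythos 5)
Your overall architecture (Hsiang symmetrization, then regularity of $\Lambda$, then Theorem~\ref{th:stablerot2} to settle the topology) coincides with the paper's, and your treatment of the final topological step is correct. The genuine gap is in the symmetrization step, precisely where you yourself flag the difficulty. You propose to produce, by reflecting and passing to a limit, \emph{some} minimizer invariant under the rotations fixing a line $L$, and then to ``transfer the symmetry back'' to the original $E$ using the connectedness of $\Sg$. No such transfer is available: distinct minimizers of the same weighted volume may coexist, so knowing that a symmetrized competitor $E^*$ is again a minimizer says nothing by itself about $E$. The paper closes this gap with a rigidity argument. For a bisecting hyperplane $\Pi$ through $0$ one reflects the half of smaller interior area to form $E^*$; minimality forces $A_\psi(\Sg^+)=A_\psi(\Sg^-)$, so $E$ also minimizes in the half-ball $B^+$ and hence $\Sg$ meets $\Pi$ \emph{orthogonally} along $\Sg\cap\Pi$ (this is where the connectedness of $\Sg$ enters, to guarantee $\Sg\cap\Pi\neq\emptyset$). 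Since $\Sg$ and the regular part $\Sg^*$ of $\overline{\ptl E^*\cap B}$ coincide on $\Pi^+$, have the same constant weighted mean curvature, and make orthogonal contact with $\Pi$ along the common set $\Sg\cap\Pi=\Sg^*\cap\Pi$, the unique continuation principle applied to the weighted mean curvature equation yields $\Sg=\Sg^*$, hence $E=E^*$. Without this step, or an equivalent one, the symmetry of the given $E$ about $L$ is not established.

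A secondary point: the passage from symmetry about individual bisecting hyperplanes to full rotational symmetry does not require a limiting procedure. The paper constructs $n$ mutually orthogonal bisecting hyperplanes $\Pi_1,\ldots,\Pi_n$ through $0$ by successive continuity arguments; $E$ is then invariant under the Euclidean symmetry about $L:=\Pi_1\cap\cdots\cap\Pi_n$, which forces \emph{every} hyperplane containing $L$ to bisect $E$, and the rigidity statement above applies to each of them. Your regularity discussion is workable, although the case $p\in\Sg_0\setminus L$ is handled more cheaply by observing that the orbit of $p$ under the rotations fixing $L$ would be an $(n-1)$-dimensional sphere contained in $\Sg_0$, contradicting the bound on its Hausdorff dimension by $n-7$; and at $p\in L$ the claim that an invariant area-minimizing tangent cone is a hyperplane or half-hyperplane should be justified via the classification of minimal hypersurfaces of revolution rather than asserted outright.
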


\begin{proof}
From the regularity results and equation~\eqref{eq:macorra} we can represent the minimizer as an open set $E\subset\Om$ such that $P_\psi(E,U)=A_\psi(\Sg\cap U)$ for any open set $U\subseteq B$.

We first prove that $E$ is symmetric about some line $L\subset\rrn$ with $0\in L$. For this we will use \emph{Hsiang symmetrization}~\cite{hsiang} in our setting. The main idea is that, \emph{if a hyperplane $\Pi\subset\rrn$ with $0\in\Pi$ bisects $E$} (i.e., the weighted volume of $E$ at both sides of $\Pi$ is the same), \emph{then $E$ is symmetric with respect to $\Pi$}. To show this we proceed below as in the proof of \cite[Lem.~1']{hsiangs}.

Let $\Pi^{\pm}$ be the connected components of $\rrn\setminus\Pi$. For any set $S\subset\rrn$ we denote $S^{\pm}:=S\cap\Pi^{\pm}$. After changing $\Pi^+$ to $\Pi^-$ if necessary we can suppose that $A_\psi(\Sg^+)\leq A_\psi(\Sg^-)$. We define the set
\[
E^*:=E^+\cup s(E^+)\cup (E\cap\Pi),
\] 
where $s$ is the mirror symmetry with respect to $\Pi$. Since the metric $g$ is $O(n+1)$-invariant, $s(B)=B$, the weight $e^\psi$ only depends on $d(p)$ and $V_\psi(E^+)=V_\psi(E^-)$, it follows that $V_\psi(E^*)=V_\psi(E)$ and 
\[
P_\psi(E^*,B)\leq A_\psi(\Sg)=P_\psi(E,B).
\] 
As $E$ is a weighted minimizer we obtain $P_\psi(E^*,B)=P_\psi(E,B)$, so that $A_\psi(\Sg^+)=A_\psi(\Sg^-)$. In particular, $E$ also minimizes the weighted relative perimeter in $B^+$ for fixed weighted volume. Hence Lemma~\ref{lem:varprop} (i) and Remark~\ref{re:freebound} entail that $\Sg$ meets $\Pi$ orthogonally along $\Sg\cap\Pi$ (observe that $\Sg\cap\Pi\neq\emptyset$ because $\Sg$ is connected). On the other hand, as $E^*$ is another weighted isoperimetric region in $B$, the regularity results imply that $\Lambda^*:=\overline{\ptl E^*\cap B}$ coincides with a smooth embedded hypersurface $\Sg^*$, up to a closed set of singularities with Hausdorff dimension less than or equal to $n-7$. By using again Lemma~\ref{lem:varprop} (i) and that $\Sg\cap\Pi^+=\Sg^*\cap\Pi^+$ we infer that $\Sg$ and $\Sg^*$ have the same constant weighted mean curvature. Finally, since $\Sg\cap\Pi=\Sg^*\cap\Pi$ and $\Sg$ meets $\Pi$ orthogonally, we conclude from the unique continuation property~\cite{aronszajn} applied to the weighted mean curvature equation that $\Sg=\Sg^*$. From here we deduce $\Lambda=\Lambda^*$ and $E=E^*$, so that $E$ is symmetric with respect to $\Pi$.

Next, we employ the symmetry property of $E$ with respect to bisecting hyperplanes to derive its rotational symmetry. Let $\Pi_1:=v_1^\bot$ be a hyperplane in $\rrn$ bisecting $E$. Consider the family $\Pi_v:=v^\bot$ with $v\in\sph^{n-1}:=\sph^n\cap v_1^\bot$. By continuity, there is $v_2\in\sph^{n-1}$ such that $\Pi_2:=\Pi_{v_2}$ bisects $E$. Similarly, we can find $\Pi_3:=v_3^\bot$ with $v_3\in\sph^{n-2}:=\sph^n\cap\{v_1,v_2\}^\bot$ and bisecting $E$. This produces a family of hyperplanes $\{\Pi_1,\ldots,\Pi_n\}$ bisecting $E$ and with $\Pi_i\perp\Pi_j$ for any $i\neq j$. We know that $E$ is symmetric with respect to any $\Pi_i$ and so, $E$ is invariant under the Euclidean symmetry $r$ associated to the line $L:=\Pi_1\cap\ldots\cap\Pi_n$. Thus, for any hyperplane $\Pi$ with $L\subset\Pi$, we have $r(E^+)=E^-$, where $E^{\pm}:=E\cap\Pi^{\pm}$. Since $g$ is $O(n+1)$-invariant and $e^\psi$ only depends on $d(p)$ we deduce that $\Pi$ bisects $E$, so that $E$ is symmetric with respect to $\Pi$. As $\Pi$ is any hyperplane containing $L$ then $E$ is symmetric with respect to $L$.

We now prove that $\Lambda=\Sg$, i.e., the singular set $\Sg_0$ is empty. Note that $\Sg_0\subseteq L$. Otherwise $\Sg_0$ would contain the $(n-1)$-dimensional round sphere obtained from the action over a point $p\in\Sg_0\setminus L$ of the maps in $O(n+1)$ fixing $L$. This would contradict that the Hausdorff dimension of $\Sg_0$ is less than or equal to $ n-7$. Now, take $p\in\Lambda\cap L$ and consider an associated area-minimizing closed tangent cone $C_p\sub\rrn$. Because of the symmetry of $E$, this cone is also symmetric about some line. Since $C_p$ is area-minimizing then it has vanishing mean curvature along its regular points. Hence, the classification of minimal hypersurfaces of revolution in $\rrn$ implies that $C_p$ is a hyperplane if $p\in\Om$ or a half-hyperplane if $p\in\ptl\Om$. It follows that $p\in\Sg$. All this shows that $\Sg_0=\emptyset$.

Finally, since $E$ is a weighted minimizer and $\Sg_0=\emptyset$, then $\Lambda$ is a compact, connected and $\psi$-stable weighted area-stationary hypersurface, which is also symmetric with respect to $L$. So, the topological conclusion about $\Lambda$ is a direct consequence of Theorem~\ref{th:stablerot2}.
\end{proof}

\begin{remarks}
\label{re:many}
(i). In general, the regular part $\Sg$ of the interior boundary $\Lambda$ need not be connected. When the Bakry-\'Emery-Ricci tensor in \eqref{eq:fricci} satisfies $\ric_\psi\geq 0$, then $\Sg$ is either connected or a totally geodesic hypersurface with $\ptl\Sg=\emptyset$ and $\ric_\psi(N,N)=0$ on $\Sg$. This is done by inserting locally constant and nowhere vanishing functions in the weighted index form~\eqref{eq:index1}, see \cite[Thm.~2.2, Cor.~2.8]{rosales-cylinders}. In particular, if $\ric_\psi>0$ over non-vanishing vector fields, then $\Sg$ is connected. 

(ii). The properties in Theorem~\ref{th:isop} also hold for bounded minimizers outside a round ball about $0$, or in $\rrn$ endowed with an $O(n+1)$-invariant metric and a weight $e^\psi$ only depending on $d(p)$. A remarkable difference with respect to the case of round balls is that the existence and boundedness of weighted isoperimetric regions are not guaranteed. Some related results in Euclidean space with radial weight were proved by Morgan and Pratelli~\cite{morgan-pratelli}. We point out that, in this setting, the rotational symmetry of minimizers was obtained in \cite{morgan-pratelli} by using spherical symmetrization.

(iii). For a round ball $B$ in $\rrn$ with constant weight, the information in Theorem~\ref{th:isop} combined with the classification of constant mean curvature hypersurfaces with rotational symmetry in $\rrn$, allow to deduce that the interior boundary of any isoperimetric solution is a totally umbilical disk. For more details we refer the reader to the proof of Ros~\cite[Thm.~4]{ros-willmore} after Burago and Maz'ya~\cite[Lem.~9 in p.~54]{burago-mazya}, see also Bokowski and Sperner~\cite[Sect.~2]{bokowski-sperner}. Unfortunately, the classification of hypersurfaces of revolution with constant mean curvature with respect to a radial weight is much more involved, even in the Gaussian case, where only some special cases are completely understood.

(iv). For $n=1$ the regularity and topological conclusions in Theorem~\ref{th:isop}  are valid for any Riemannian metric and any smooth weight. When the metric is $O(2)$-invariant and the weight only depends on $d(p)$ we can apply Hsiang symmetrization to deduce that any minimizer is symmetric with respect to a line containing $0$.
\end{remarks}

\begin{examples}
(i). Theorem~\ref{th:isop} is valid in $\rrn$ with a conformal metric $g_\mu:=e^{2\mu}\,g_0$ associated to a smooth radial function $\mu$, see Example~\ref{ex:conformal}. In particular, it applies for geodesic balls in simply connected space forms. As indicated in Remarks~\ref{re:many} (ii) the result also holds for bounded minimizers (not necessarily confined into a ball) in these spaces with respect to weights only depending on $d(p)$.

(ii). Let $e^\psi$ be a radial non-decreasing weight in a Euclidean round ball $B\subset\rrn$ about $0$. Since $e^\psi$ attains its minimum value at $0$, we might expect the weighted minimizers to be concentrated near $0$, at least for small weighted volumes and big enough radius of $B$. For a minimizer $E$ such that $\overline{E}\subset B$ and $\Sg$ is connected, Theorem~\ref{th:isop} implies that $\ptl E$ is a smooth $n$-dimensional sphere symmetric with respect to some line $L$ with $0\in L$. In the special case where $e^\psi$ is log-convex, a result of Chambers~\cite{chambers} entails the stronger conclusion that $E$ is a round ball contained in the region of $B$ where $e^\psi$ equals its minimum value.
\end{examples}

Next, we discuss the partitioning problem for Gaussian balls. In this situation we can improve Theorem~\ref{th:isop} not only by showing that spherical hypersurfaces cannot minimize, but also by classifying all isoperimetric regions with vanishing weighted mean curvature. 

\begin{theorem}
\label{th:isopgauss}
Consider an open Euclidean round ball $B\subset\rrn$ about $0$ with Gaussian weight $e^{\psi(p)}:=e^{-|p|^2/2}$. Then, the interior boundary $\Lambda:=\overline{\ptl E\cap B}$ of any weighted isoperimetric region $E$ is a smooth closed $n$-dimensional disk symmetric about some line $L$ containing $0$. Moreover, if the associated weighted mean curvature vanishes, then $\Lambda$ is an equatorial disk.
\end{theorem}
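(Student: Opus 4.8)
The plan is to derive both assertions of Theorem~\ref{th:isopgauss} by combining Theorem~\ref{th:isop} with a dedicated instability argument for the Gaussian weight. Since $e^{\psi(p)}=e^{-|p|^2/2}$ is radial and log-concave, its Bakry-\'Emery-Ricci tensor is $\ric_\psi=-\nabla^2\psi=\text{Id}>0$, so by Remarks~\ref{re:many}(i) the regular part $\Sg$ of $\Lambda$ is automatically connected. Thus Theorem~\ref{th:isop} applies verbatim and tells us that $\Lambda=\Sg$ is a smooth, connected, $\psi$-stable weighted area-stationary hypersurface, symmetric about a line $L$ through $0$, and homeomorphic either to an $n$-sphere (if $\ptl\Sg=\emptyset$) or to a closed $n$-disk (if $\ptl\Sg\neq\emptyset$). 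To finish the first statement it therefore suffices to rule out the case $\ptl\Sg=\emptyset$, i.e.\ to show that a compact $\psi$-stable weighted area-stationary hypersurface with empty boundary cannot sit inside a Gaussian ball. This is exactly the content promised for Proposition~\ref{prop:unit}: one produces an explicit volume-preserving test function violating $\indo_\psi(u,u)\geq 0$. The natural candidate is built from the position vector field $p\mapsto p$ on $\rrn$, whose normal component $u:=\escpr{p,N}$ satisfies a Jacobi-type identity for the Gaussian operator; using $\int_\Sg\escpr{p,N}\,da_\psi=\int_E\divv_\psi(p)\,dv_\psi=\int_E(n+1-|p|^2)\,dv_\psi$, one adjusts by a constant (or passes to $u-c$) to land in $\fff_\psi^\infty(\Sg)$ and computes $\indo_\psi(u,u)<0$ after an integration by parts, the strict sign coming from $\ric_\psi>0$ and compactness. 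I would cite Proposition~\ref{prop:unit} for this instability and conclude $\ptl\Sg\neq\emptyset$, hence $\Lambda$ is a closed $n$-disk symmetric about $L$.

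For the second statement, assume additionally that the constant weighted mean curvature $H_\psi$ of $\Lambda$ vanishes. Here I would again invoke Proposition~\ref{prop:unit}, which (as announced in the introduction, following the ideas of Ros and Vergasta~\cite[Thm.~6]{ros-vergasta} and the result of Li and Xiong~\cite[Thm.~1]{li-xiong}) states that a $\psi$-stable weighted area-stationary hypersurface in a Gaussian ball with $H_\psi=0$ must be an equatorial disk. The mechanism is the standard one: when $H_\psi=0$ the hypersurface $\Sg$ is not merely a constrained critical point but an unconstrained one, so the stronger stability inequality $\indo_\psi(u,u)\geq 0$ holds for \emph{all} $u\in C^\infty(\Sg)$ (not only those with $\int_\Sg u\,da_\psi=0$), and one tests with the coordinate functions $\escpr{e_i,N}$ associated to an orthonormal basis. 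Each such function is a weighted Jacobi field (it is the normal component of the Killing field generating a translation, except that translations are \emph{not} Gaussian isometries, so one instead uses that $\Delta_{\Sg,\psi}\escpr{e_i,N}+(\ric_\psi(N,N)+|\sg|^2)\escpr{e_i,N}=\escpr{e_i,N}$ for the Gaussian weight, a computation parallel to \eqref{eq:totoro}); summing over $i$ and using $\sum_i\escpr{e_i,N}^2=1$ together with $\ric_\psi(N,N)=1$ forces $|\sg|^2\equiv 0$ on $\Sg$ unless some degenerate possibility occurs, whence $\Sg$ is totally geodesic, and being symmetric about $L$ and passing through $0$ it is the equatorial disk $L^\perp\cap B$. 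One must also check the boundary terms in $\indo_\psi$ vanish or have the right sign along $\ptl\Sg$; for the free-boundary-type angle $\theta$ determined by $H_\psi=0$ in the Gaussian ball, the relevant boundary integrand involves $\text{II}(\overline\nu,\overline\nu)$ and $\sg(\nu,\nu)$, and one uses that $\ptl B$ is a round sphere (positively curved) to control the sign.

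The main obstacle is the instability argument in the $\ptl\Sg=\emptyset$ case and, to a lesser extent, the careful bookkeeping of boundary terms in the $H_\psi=0$ case. For the closed case one cannot use a Killing-field test function (translations do not preserve $e^{-|p|^2/2}$), so the right object is $u=\escpr{p,N}-c$ with $c$ chosen to enforce the volume constraint, and one must verify that $\ele_\psi\escpr{p,N}=-\escpr{p,N}$ (the position field has $\divv_\psi(p)=n+1-|p|^2$ and one differentiates along the normal), giving $\indo_\psi(u,u)=-\int_\Sg(\escpr{p,N}^2+c\cdot(\text{lower order}))\,da_\psi<0$ after integrating by parts and using that $\Sg$ is compact with no boundary; the strict negativity is what rules out a closed stable example, and is where I expect the computation to need the most care. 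For the $H_\psi=0$ case the substantive point is promoting the stability inequality from $\fff_\psi^\infty(\Sg)$ to all of $C^\infty(\Sg)$ — valid precisely because $H_\psi=0$ makes the first variation of $\eee_\psi$ vanish without the volume constraint — and then identifying the equality case of $\sum_i\indo_\psi(\escpr{e_i,N},\escpr{e_i,N})\geq 0$; all of this is packaged into Proposition~\ref{prop:unit}, which I would state and prove in full there and merely cite here, keeping the proof of Theorem~\ref{th:isopgauss} itself short: apply Theorem~\ref{th:isop}, use $\ric_\psi>0$ to get connectedness, cite Proposition~\ref{prop:unit} to exclude closed minimizers, and cite it again to pin down the $H_\psi=0$ case as the equatorial disk.
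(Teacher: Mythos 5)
Your top-level structure is the same as the paper's: connectedness of $\Sg$ from $\ric_\psi>0$ via Remarks~\ref{re:many}\,(i), then Theorem~\ref{th:isop}, then an instability statement to exclude the closed (spherical) case, then a rigidity statement for $H_\psi=0$. The paper's own proof of Theorem~\ref{th:isopgauss} actually quotes McGonagle--Ross for the closed case and Li--Xiong for the $H_\psi=0$ case, and presents Proposition~\ref{prop:unit} separately as the self-contained alternative you invoke; so citing Proposition~\ref{prop:unit} (noting that its part (ii) is stated for the \emph{unit} ball, where $\text{II}(N,N)=1$ enters the boundary term) is an acceptable route and essentially the one the paper endorses.

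However, the mechanism you sketch for the $H_\psi=0$ case is genuinely wrong, and this matters because you say you would prove Proposition~\ref{prop:unit} this way. First, $H_\psi=0$ does \emph{not} promote the stability inequality from $\fff^\infty_\psi(\Sg)$ to all of $C^\infty(\Sg)$: an isoperimetric region is a minimizer only among competitors of the same weighted volume, so the second variation is controlled only for volume-preserving deformations, i.e.\ for mean-zero test functions, regardless of the value of the Lagrange multiplier. Second, the angle functions $\va_i:=\escpr{e_i,N}$ are not admissible: $\int_\Sg\va_i\,da_\psi\neq 0$ in general (for the equatorial disk with normal $e_1$ one gets $\va_1\equiv 1$). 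Indeed, since $\ele_\psi\va_i=\va_i$, plugging $\va_i$ into \eqref{eq:index2} and summing over $i$ with $\sum_i\va_i^2=1$ would give $\sum_i\indo_\psi(\va_i,\va_i)=-A_\psi(\Sg)-(\text{boundary terms})<0$, which would ``prove'' that the equatorial disk itself is unstable --- an absurdity that signals the inadmissibility of these test functions. The paper's Proposition~\ref{prop:unit}\,(ii) instead uses the coordinate functions recentred to have zero mean, $v_i:=\pi_i-c_i\in\fff^\infty_\psi(\Sg)$ with $\pi_i(p)=\escpr{p,e_i}$, and the Ros--Vergasta computation then yields $c=0$ and $|\sg|^2\equiv 0$. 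Similarly, for the closed case the paper does not use $u=\escpr{p,N}-c$ but the functions $u_i:=H_\psi\va_i-\pi_i$, which lie in $\fff^\infty_\psi(\Sg)$ automatically when $\ptl\Sg=\emptyset$ and satisfy $\ele_\psi u_i=-|\sg|^2\,\pi_i$; your claimed identity $\ele_\psi\escpr{p,N}=-\escpr{p,N}$ is not established (the position field is not Killing, so \eqref{eq:totoro} does not apply to it), and that step would need to be redone.
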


\begin{proof}
Let $E\subset B$ be a weighted minimizer. Since the Bakry-\'Emery-Ricci tensor satisfies equality $\ric_\psi(X,X)=|X|^2$, we know from Remarks~\ref{re:many} (i) that the regular part $\Sg$ of $\Lambda$ is connected. By Theorem~\ref{th:isop} we get that $\Lambda$ is a smooth hypersurface, symmetric with respect to some line $L$ with $0\in L$, and homeomorphic either to a sphere or to a disk. In the first case $\overline{E}\subset B$ and so, $\Lambda$ would be a compact and connected $\psi$-stable hypersurface with empty boundary in Gauss space. Then, a result of McGonagle and Ross~\cite[Cor.~4.8]{mcgonagle-ross}, see also \cite[Cor.~4.9]{rosales-gauss}, would imply that $\Sg$ is a hyperplane, a contradiction. From here we deduce that $\Lambda$ is a closed $n$-dimensional disk. Finally, the classification by Li and Xiong~\cite[Thm.~1]{li-xiong} of weighted stable area-stationary hypersurfaces in $B$ with vanishing mean curvature yields that $\Lambda$ is an equatorial disk.
\end{proof}

The previous proof relies on Theorem~\ref{th:isop} and characterization results obtained in other works. The following proposition contains two statements (i) and (ii), which lead to a fully self-contained proof of Theorem~\ref{th:isopgauss}. Our proof of (i) is different from the aforementioned ones in \cite{mcgonagle-ross} and \cite{rosales-gauss}. For proving (ii) we extend to the Gaussian setting an argument of Ros and Vergasta in \cite[Thm.~6]{ros-vergasta}.

\begin{proposition}
\label{prop:unit}
Let $\Sg$ be a compact, two-sided, weighted area-stationary hypersurface inside the Euclidean unit ball $B\subset\rrn$ with Gaussian weight $e^{\psi(p)}:=e^{-|p|^2/2}$.
\begin{itemize}
\item[(i)] If $\ptl\Sg=\emptyset$, then $\Sg$ is $\psi$-unstable.
\item[(ii)] If $H_\psi=0$ and $\Sg$ is $\psi$-stable, then $\Sg$ is an equatorial disk.
\end{itemize}
\end{proposition}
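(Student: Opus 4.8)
For part~(i) the plan is to exhibit an explicit admissible test function violating $\indo_\psi(u,u)\geq 0$ in Lemma~\ref{lem:varprop}~(ii). The natural candidate is a constant: if $\ptl\Sg=\emptyset$ and $H_\psi$ is the (constant) weighted mean curvature, then $\mathcal{L}_\psi 1 = \big(\ric_\psi(N,N)+|\sg|^2\big) = |p|^2\big|_\Sg$-type expression—more precisely, since the Gaussian metric is Euclidean, $\ric=0$ and $\ric_\psi(N,N)=\langle N,N\rangle=1$, so $\mathcal{L}_\psi 1 = 1+|\sg|^2 > 0$ pointwise. Thus a constant need not lie in $\fff^\infty_\psi(\Sg)$, but after subtracting its mean it becomes $0$; the honest choice is therefore a coordinate-type function. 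I would take $u:=\langle p,N\rangle - c$ (the support function of $\Sg$, recentered to have zero weighted mean), or alternatively a linear coordinate function $x_k$ recentered. The key computation is that for the position vector field $X=p$ on Gauss space, $\divv_\psi X = (n+1) - |p|^2$, and the Gaussian weighted mean curvature of a sphere about $0$ of radius $R$ is $H_\psi = nH - \langle\nabla\psi,N\rangle = n/R + R$; more usefully, for the support function $u=\langle p,N\rangle$ one has the classical identity $\mathcal{L}_\psi u = -H_\psi + (\text{something})u$ adapted to the weight. The cleanest route: compute $\indo_\psi(u,u)$ directly via \eqref{eq:index2} using $\mathcal{L}_\psi u$, show the bulk term is strictly negative because $|p|\leq 1$ on $\overline B$ forces the weight-induced potential to dominate, and conclude $\Sg$ is $\psi$-unstable. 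The main obstacle will be handling the zero-mean constraint $\int_\Sg u\,da_\psi=0$: one must either verify the natural test function already has zero weighted mean by a symmetry or divergence argument, or pass to $u - \bar u$ and control the correction term, checking it does not destroy the sign.

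For part~(ii) the plan follows Ros and Vergasta. Assume $H_\psi=0$ and $\Sg$ is $\psi$-stable with $\ptl\Sg\neq\emptyset$ (the case $\ptl\Sg=\emptyset$ is excluded by part~(i)). Since $H_\psi=0$ forces $\escpr{\eta,N}=\cos\theta$ with $\theta=\pi/2$ (one checks that vanishing weighted mean curvature together with the capillary balancing on a Gaussian ball pins $\theta$ down, or one simply works in the free-boundary case which is what minimizers with $H_\psi=0$ satisfy here), $\Sg$ is a weighted free boundary minimal hypersurface. Now use the linear coordinate functions as test functions: for each $k$, the function $u_k:=x_k\big|_\Sg$ satisfies $\mathcal{L}_\psi u_k = \divv_{\Sg,\psi}\nabla_\Sg x_k + (\text{pot})x_k$; a Bochner-type computation on Gauss space shows $\Delta_{\Sg,\psi} x_k = -H_\psi\langle e_k,N\rangle - x_k + x_k = \ldots$—the point is that the Gaussian term and the Euclidean Laplacian combine so that $\mathcal{L}_\psi x_k = |\sg|^2 x_k + (\text{first order})$, and the boundary term $\ptl x_k/\ptl\nu + \mathrm{II}(N,N)x_k$ vanishes because on the unit sphere $\mathrm{II}=\mathrm{Id}$ and $x_k$ restricted to $\ptl\Sg\subset\sph^n$ has $\ptl_\nu x_k = \langle e_k,\nu\rangle = -\langle e_k, p\rangle_{\text{tangential part}}$. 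Summing $\indo_\psi(x_k - \bar{x_k}, x_k - \bar{x_k})$ over $k$ and using stability gives $\int_\Sg |\sg|^2(\cdot)\,da_\psi \leq 0$, forcing $\sg\equiv 0$, so $\Sg$ is totally geodesic; a totally geodesic hypersurface in $\overline B$ meeting $\sph^n$ orthogonally through a line is an equatorial disk.

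The hard part will be the bookkeeping of the zero-mean projections and the precise boundary identities: unlike the Euclidean free-boundary case one must track the extra first-order terms $\escpr{\nabla\psi,\nabla_\Sg x_k}= -\escpr{p,\nabla_\Sg x_k}$ appearing in $\Delta_{\Sg,\psi}$, and verify these contribute a nonnegative (or vanishing) amount after summation so that the inequality still closes. I expect this to work because the position-vector structure of the Gaussian weight is exactly adapted to the conformal Killing field that Wang–Xia exploited in the unweighted case; the weight replaces their conformal factor by an honest linear term, which is if anything more favorable. The boundary term cancellation, using $\mathrm{II}=\mathrm{Id}$ on $\sph^n$ and the free-boundary condition, should go through verbatim as in \cite[Thm.~6]{ros-vergasta}.
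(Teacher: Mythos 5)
Your plan for part (ii) is essentially the paper's proof: recentered coordinate functions $v_i=\pi_i-c_i$, summation over $i$ using $\sum_i|\nabla_\Sg v_i|^2=n$ and $\text{II}=\text{Id}$ on $\sph^n$, leading to $0\leq -|c|^2A_\psi(\Sg)-|c|^2L_\psi(\ptl\Sg)-\int_\Sg|\sg|^2\,|X-c|^2\,da_\psi$ and hence $c=0$ and $\sg\equiv0$. One correction there: the Robin boundary operator $\ptl \pi_i/\ptl\nu+\text{II}(N,N)\,\pi_i$ does \emph{not} vanish for coordinate functions (translations do not preserve the ball); what saves the argument is that the boundary term $-\int_{\ptl\Sg}|X-c|^2\,dl_\psi$ enters the index form with a favorable sign and is evaluated exactly via the identity $\int_{\ptl\Sg}\pi_i\,dl_\psi=-\int_\Sg\pi_i\,da_\psi$, which follows from $\Delta_{\Sg,\psi}\pi_i=-\pi_i$ and $\nu=-X$ along $\ptl\Sg$.

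Part (i), however, has a genuine gap: neither of your candidate test functions works. The recentered support function $\escpr{p,N}-c$ vanishes identically on every round sphere centered at $0$ (these are precisely the closed area-stationary examples one most needs to rule out), so it gives no information there. The plain recentered coordinates give, after summation, $\sum_i\indo_\psi(\pi_i,\pi_i)=\int_\Sg(n-|X|^2)\,da_\psi-\int_\Sg|\sg|^2|X|^2\,da_\psi$ plus recentering corrections; the leading term satisfies $\int_\Sg(n-|X|^2)\,da_\psi\geq (n-1)\,A_\psi(\Sg)\geq 0$, i.e.\ it has the \emph{wrong} sign, so the inequality does not close. The paper's test functions are $u_i:=H_\psi\,\va_i-\pi_i$ with $\va_i=\escpr{e_i,N}$, which equal $\Delta_{\Sg,\psi}\pi_i$; this makes $u_i\in\fff^\infty_\psi(\Sg)$ automatic when $\ptl\Sg=\emptyset$ (resolving your zero-mean worry at no cost), and the cross terms with $\va_i$ (which satisfies $\ele_\psi\va_i=\va_i$) flip the sign of the leading term, yielding
\begin{equation}
\sum_{i=1}^{n+1}\indo_\psi(u_i,u_i)=\int_\Sg\big(|X|^2-n\big)\,da_\psi-H_\psi^2\,A_\psi(\Sg)-\int_\Sg|\sg|^2\,|X|^2\,da_\psi<0,
\end{equation}
since $|X|\leq 1$ on $\overline{B}$. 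Finding this particular combination of the translational Jacobi fields $\va_i$ and the coordinates $\pi_i$ is the crux of the proof of (i), and it is the step your proposal is missing.
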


\begin{proof}
For a fixed vector $e\in\rrn\setminus\{0\}$ we consider the height function $\pi(p):=\escpr{p,e}$ with $p\in\rrn$ and the angle function $\va(p):=\escpr{e,N(p)}$ with $p\in\Sg$. It is clear that $\nabla_\Sg\pi=e-\va\,N$ and $\Delta_\Sg\pi=nH\,\va$, where $H$ is the Euclidean mean curvature of $\Sg$. By using \eqref{eq:sglaplacian} and that $(\nabla\psi)(p)=-p$, we get
\begin{equation}
\label{eq:senika}
\Delta_{\Sg,\psi}\pi=H_\psi\,\va-\pi,
\end{equation}
where $H_\psi$ is the weighted mean curvature of $\Sg$ defined in \eqref{eq:fmc}. The Riemannian divergence theorem and equation~\eqref{eq:silly} imply that the function $u:=H_\psi\,\va-\pi$ satisfies $u\in\fff^\infty_\psi(\Sg)$ when $\ptl\Sg=\emptyset$. From \eqref{eq:jacobi} and the fact that $\ric_\psi(N,N)=1$, we have 
\[
\ele_\psi\pi=H_\psi\,\va+|\sg|^2\,\pi.
\]
On the other hand, for the Euclidean translations $\phi_t(p):=p+te$, we deduce by equation~\eqref{eq:totoro} that
\[
\ele_\psi\va=\va.
\]
From the two previous equalities and the fact that $H_\psi$ is constant (Lemma~\ref{lem:varprop} (i)), we infer
\[
\ele_\psi u=-|\sg|^2\,\pi.
\]
By having in mind \eqref{eq:index2} we derive the following identity when $\ptl\Sg=\emptyset$: 
\begin{equation}
\label{eq:sesum}
\indo_\psi(u,u)=-\int_\Sg u\,\ele_\psi u\,da_\psi=H_\psi\,\int_\Sg|\sg|^2\,\va\,\pi\,da_\psi-\int_\Sg|\sg|^2\,\pi^2\,da_\psi.
\end{equation}
To transform the first integral, observe that
\[
\int_\Sg\va\,\pi\,da_\psi=\int_\Sg\pi\,\ele_\psi\va\,da_\psi=\int_\Sg\va\,\ele_\psi\pi\,da_\psi=H_\psi\,\int_\Sg\va^2\,da_\psi+\int_\Sg|\sg|^2\,\va\,\pi\,da_\psi,
\]
where we have used the symmetry of $\indo_\psi$. By substituting this information into \eqref{eq:sesum}, we obtain
\[
\indo_\psi(u,u)=H_\psi\,\int_\Sg\va\,\pi\,da_\psi-H^2_\psi\,\int_\Sg\va^2\,da_\psi-\int_\Sg|\sg|^2\,\pi^2\,da_\psi.
\]

Let $\{e_1,\ldots,e_{n+1}\}$ be the standard basis in $\rrn$. For any $i=1,\ldots,n+1$ we denote $u_i:=H_\psi\,\va_i-\pi_i$, where $\va_i(p):=\escpr{e_i,N(p)}$ and $\pi_i(p):=\escpr{p,e_i}$. When $\ptl\Sg=\emptyset$ the last equation gives us
\begin{equation}
\label{eq:sesum2}
\sum_{i=1}^{n+1}\indo_\psi(u_i,u_i)=H_\psi\,\int_\Sg h\,da_\psi-H^2_\psi\,A_\psi(\Sg)-\int_\Sg|\sg|^2\,|X|^2\,da_\psi,
\end{equation}
where $X(p):=p$ and $h(p):=\escpr{X(p),N(p)}$, for any $p\in\Sg$. Next, we transform the first integral above. It is clear from \eqref{eq:divf2} and \eqref{eq:fmc} that
\begin{equation}
\label{eq:divX}
n-|X|^2=\divv_{\Sg,\psi}X=\divv_{\Sg,\psi}X^\top-H_\psi\,h,
\end{equation}
where $X^\top:=X-hN$. Hence, the Riemannian divergence theorem entails
\[
\int_\Sg(n-|X|^2)\,da_\psi=-H_\psi\,\int_\Sg h\,da_\psi
\]
when $\ptl\Sg=\emptyset$. By plugging this into \eqref{eq:sesum2} we conclude that
\[
\sum_{i=1}^{n+1}\indo_\psi(u_i,u_i)=\int_\Sg(|X|^2-n)\,da_\psi-H^2_\psi\,A_\psi(\Sg)-\int_\Sg|\sg|^2\,|X|^2\,da_\psi.
\]
Since the right hand side in the equality above is negative, we can find $j\in\{1,\ldots,n+1\}$ such that $\indo_\psi(u_j,u_j)<0$. By Lemma~\ref{lem:varprop} (ii) this shows that $\Sg$ is $\psi$-unstable when $\ptl\Sg=\emptyset$, so that (i) is proved.

Now we prove (ii). Consider the constants $c_i:=A_\psi(\Sg)^{-1}\int_\Sg\pi_i\,da_\psi$ and the vector $c:=\sum_{i=1}^n c_i\,e_i$. If we define $v_i:=\pi_i-c_i$, then it is clear that $v_i\in\fff^\infty_\psi(\Sg)$. From the stability inequality in Lemma~\ref{lem:varprop} (ii) it follows that $\indo_\psi(v_i,v_i)\geq 0$ for any $i=1,\ldots,n+1$. By taking into account \eqref{eq:index1} together with $\theta=\pi/2$ and $\text{II}(N,N)=1$ along $\ptl\Sg$, we infer
\begin{equation}
\label{eq:seso1}
0\leq \sum_{i=1}^{n+1}\indo_\psi(v_i,v_i)=\int_\Sg\left(\sum_{i=1}^{n+1}|\nabla_\Sg v_i|^2\right)da_\psi-\int_{\Sg}(1+|\sg|^2)\,|X-c|^2\,da_\psi-\int_{\ptl\Sg}|X-c|^2\,dl_\psi.
\end{equation}
Since $\nabla_\Sg v_i=\nabla_\Sg\pi_i=e_i-\va_i\,N$, then $\sum_{i=1}^{n+1}|\nabla_\Sg v_i|^2=n$ on $\Sg$. Thus, by using \eqref{eq:divX}, the Riemannian divergence theorem, the fact that $\nu=-X$ along $\ptl\Sg$ and equality $H_\psi=0$ on $\Sg$, we have
\begin{equation}
\label{eq:seso2}
\int_\Sg\left(\sum_{i=1}^{n+1}|\nabla_\Sg v_i|^2\right)da_\psi=n\,A_\psi(\Sg)=\int_\Sg|X|^2\,da_\psi-\int_{\ptl\Sg}\escpr{X,\nu}\,dl_\psi=\int_\Sg|X|^2\,da_\psi+L_\psi(\ptl\Sg).
\end{equation}
On the other hand, as $H_\psi=0$ on $\Sg$, then $\Delta_{\Sg,\psi}\pi_i=-\pi_i$ by \eqref{eq:senika}. The divergence theorem gives us
\[
-\int_\Sg\pi_i\,da_\psi=-\int_{\ptl\Sg}\escpr{\nabla\pi_i,\nu}\,dl_\psi=\int_{\ptl\Sg}\escpr{\nabla\pi_i,X}\,dl_\psi=\int_{\ptl\Sg}\pi_i\,dl_\psi,
\]
and so
\begin{equation}
\label{eq:seso3}
\int_{\ptl\Sg}|X-c|^2\,dl_\psi=(1+|c|^2)\,L_\psi(\ptl\Sg)+2\,\int_\Sg\escpr{X,c}\,da_\psi.
\end{equation}
By substituting \eqref{eq:seso2} and \eqref{eq:seso3} into \eqref{eq:seso1}, and simplifying, we deduce
\begin{equation}
0\leq -|c|^2\,A_\psi(\Sg)-|c|^2\,L_\psi(\ptl\Sg)-\int_\Sg|\sg|^2\,|X-c|^2\,da_\psi.
\end{equation}
From here we conclude that $c=0$ and $|\sg|^2=0$ on $\Sg$. By the regularity of $\Sg$ and the orthogonality condition between $\Sg$ and $\ptl B$ we conclude that $\Sg$ is a single equatorial disk in $B$.
\end{proof}

\begin{remarks}
\label{re:exotic}
(i). Unlike the unweighted setting, the interior boundary of a minimizer in a Gaussian ball cannot be a totally umbilical disk when the associated weighted mean curvature does not vanish. This is because hyperplanes avoiding $0$ do not meet $\ptl B$ orthogonally and round spheres with constant mean curvature in Gauss space are those centered at $0$. We have shown that the interior boundary is a closed embedded disk with constant weighted mean curvature, symmetric with respect to some line containing $0$, and meeting $\ptl B$ orthogonally. Hypersurfaces of constant mean curvature $\la$ in Gauss space are also known as \emph{$\lambda$-hypersurfaces} and they are connected to the study of singularities for the Euclidean mean curvature flow~\cite{cheng-wei2}. Unfortunately, the classification of embedded $\lambda$-hypersurfaces of revolution is still incomplete. Some related results can be found in \cite{kleene-moller}, \cite{ross}, \cite{cheng-wei3}, \cite{li-wei} and \cite{cheng-lai-wei}.

(ii). Consider the Gaussian weight on the exterior $\Om$ of a Euclidean round ball centered at $0$. Since $V_\psi(\Om)<\infty$ we have existence of weighted minimizers in $\Om$ for any weighted volume. By the stability result in \cite[Cor.~4.9]{rosales-gauss}, if a minimizer $E$ satisfies $\overline{E}\subset\Om$, then the interior boundary is a hyperplane. Though half-spaces meeting $\ptl\Om$ orthogonally are natural candidates to solve the problem we have not been able to confirm if they really minimize.
\end{remarks}

We finish this work by employing our techniques to derive some interesting properties of minimizers in a different weighted setting.

Let $\Om$ be a smooth domain of a compact Riemannian manifold $\mathcal{N}^{n}$ with $\ptl\mathcal{N}=\emptyset$ and weight $e^h$. In the Riemannian cylinder $\Om\times\rr$ with horizontal weight $e^{\psi(x,s)}:=e^{h(x)}$ the vertical translations are isometries preserving the weight. By the existence result of Castro~\cite[Thm.~2.1]{castro} this guarantee that weighted minimizers of any volume exist and they are bounded. Moreover, for large weighted volumes, any weighted isoperimetric region in $\mathcal{N}\times\rr$ is equivalent to a product $\mathcal{N}\times[a,b]$, see Castro~\cite[Thm.~3.3]{castro}. Our contribution to this problem is the next result.

\begin{theorem}
\label{th:cylinders}
In the previous conditions, let $E\subset\Om\times\rr$ be any weighted minimizer with interior boundary $\Lambda$, regular part $\Sg$ and associated weighted mean curvature $H_\psi$. Then, we have:
\begin{itemize}
\item[(i)] if $H_\psi=0$ then $E=\Om\times[a,b]$, up to a measure zero set,
\item[(ii)] if $\Sg$ is connected, then $E$ is symmetric with respect to some horizontal slice $\Om\times\{s_0\}$. Moreover, the angle function $\vartheta:=\escpr{\xi,N}$ associated to the vertical Killing vector field $\xi$ has at least two nodal domains on $\Sg$. When $\Sg$ is also compact, then $\vartheta$ has exactly two nodal domains.
\end{itemize} 
\end{theorem}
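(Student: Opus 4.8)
The plan is to exploit the product structure of $\Om\times\rr$ through the vertical translations $\phi_t(x,s):=(x,s+t)$ and the horizontal reflections $(x,t)\mapsto(x,2s_0-t)$, both of which are isometries preserving the horizontal weight $e^{\psi(x,s)}=e^{h(x)}$, and to combine this with Lemma~\ref{lem:varprop}, the computations of Example~\ref{ex:cylinders}, and the symmetrization used in Theorem~\ref{th:isop}. Throughout I will use that $E$ is bounded (Castro~\cite[Thm.~2.1]{castro}), hence $\Lambda$ is compact, that the weighted relative perimeter disregards $\ptl E\cap(\ptl\Om\times\rr)$, and that therefore $\Sg$ has constant weighted mean curvature $H_\psi$ and meets $\ptl\Om\times\rr$ orthogonally by Lemma~\ref{lem:varprop} (i) with $\theta=\pi/2$.

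For (i) I would look at the height $\pi(x,s):=s$ restricted to $\Sg$, together with the vertical Killing field $\xi$ and the angle function $\vartheta:=\escpr{\xi,N}$ of Example~\ref{ex:cylinders}. Since $\nabla\pi=\xi$ one has $\nabla_\Sg\pi=\xi-\vartheta N$; the facts that $\xi$ is parallel and $\psi$ horizontal give $\divv_{\Sg,\psi}\xi=0$, while $\divv_{\Sg,\psi}(\vartheta N)=-H_\psi\vartheta$ because $\divv_{\Sg,\psi}N=-H_\psi$, so $\Delta_{\Sg,\psi}\pi=\divv_{\Sg,\psi}(\xi-\vartheta N)=H_\psi\vartheta$, which is zero when $H_\psi=0$. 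Next I would check that the boundary term of~\eqref{eq:ibp} for $\pi$ vanishes: trivially if $\ptl\Sg=\emptyset$, and because the orthogonality of $\Sg$ with $\ptl\Om\times\rr$ forces the inner conormal $\nu$ of $\ptl\Sg$ to be horizontal (so $\ptl\pi/\ptl\nu=\escpr{\xi-\vartheta N,\nu}=\escpr{\xi,\nu}=0$) otherwise. Then~\eqref{eq:ibp} gives $\int_\Sg|\nabla_\Sg\pi|^2\,da_\psi=0$, so $\pi$ is locally constant on $\Sg$; since the singular part of $\Lambda$ has Hausdorff dimension at most $n-7$ while each slice $\Om\times\{c\}$ has a fixed finite weighted area $m_0>0$, this forces $\Sg$, up to a null set, to be a finite union of slices, hence $E$ to agree a.e.\ with a finite union of bounded slabs $\Om\times(a_i,b_i)$. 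As a union of $m$ such slabs has exactly $2m$ bounding slices, $P_\psi(E,\Om\times\rr)=2m\,m_0$ while $V_\psi(E)=\big(\sum_i(b_i-a_i)\big)m_0$; comparing $E$ with the single slab $\Om\times\big(0,\sum_i(b_i-a_i)\big)$, which has the same weighted volume and weighted perimeter $2m_0$, minimality gives $m=1$, i.e.\ $E=\Om\times[a,b]$ up to a set of measure zero.

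For the symmetry in (ii) I would repeat the first part of the proof of Theorem~\ref{th:isop} with the horizontal foliation in place of the centred hyperplanes. By continuity of $s_0\mapsto V_\psi\big(E\cap(\Om\times(s_0,\infty))\big)$, which decreases from $V_\psi(E)$ to $0$, some slice $P:=\Om\times\{s_0\}$ bisects $E$; writing $P^\pm$ for the two components of $(\Om\times\rr)\setminus P$ and $S^\pm:=S\cap P^\pm$, and assuming (after swapping $P^+$ and $P^-$) that $A_\psi(\Sg^+)\leq A_\psi(\Sg^-)$, I would set $E^*:=E^+\cup s(E^+)\cup(E\cap P)$ with $s$ the mirror reflection across $P$. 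Invariance of the metric and of $e^\psi$ under $s$ and the equality $V_\psi(E^+)=V_\psi(E^-)$ give $V_\psi(E^*)=V_\psi(E)$ and $P_\psi(E^*,\Om\times\rr)\leq A_\psi(\Sg)=P_\psi(E,\Om\times\rr)$, so by minimality $A_\psi(\Sg^+)=A_\psi(\Sg^-)$, $E$ also minimizes in $\Om\times P^+$, and $\Sg$ meets $P$ orthogonally by Lemma~\ref{lem:varprop} (i) together with the connectedness of $\Sg$ (which ensures $\Sg\cap P\neq\emptyset$). Since $E^*$ is again a weighted minimizer, its regular boundary $\Sg^*$ coincides with $\Sg$ on $P^+$ and has the same constant weighted mean curvature; as $\Sg=\Sg^*$ on $P$ and both meet $P$ orthogonally, the unique continuation principle~\cite{aronszajn} for the weighted mean curvature equation forces $\Sg=\Sg^*$, hence $E=E^*$ is symmetric with respect to $\Om\times\{s_0\}$. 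Finally the statements about $\vartheta=\escpr{\xi,N}$ are precisely those in Example~\ref{ex:cylinders}: the divergence theorem (using $\divv_\psi\xi=0$ and that $\xi$ is tangent to $\ptl\Om\times\rr$) gives $\int_\Sg\vartheta\,da_\psi=0$, and $\vartheta\not\equiv 0$ on $\Sg$ (otherwise $\Sg$ would be foliated by vertical lines, contradicting the boundedness of $E$), so $\vartheta$ changes sign and has at least two nodal domains; when moreover $\Sg$ is compact it is a compact, connected, two-sided, $\psi$-stable free boundary hypersurface, and Lemma~\ref{lem:killing} applied to $\xi$ caps the number of nodal domains of $\vartheta$ at two, giving exactly two.

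I expect no serious obstacle: the computations are routine once the right fields are chosen, the combinatorial minimality step in (i) only uses that every bounding slice contributes the same weighted area, and the one delicate point is the closing-up $\Sg=\Sg^*$ in the symmetrization, which goes through exactly as in Theorem~\ref{th:isop} thanks to the connectedness hypothesis. The theorem thus amounts to the observation that in a Riemannian cylinder with horizontal weight the vertical translations and horizontal reflections play the roles that rotations and centred reflections played in Sections~\ref{sec:capillary} and~\ref{sec:isop}.
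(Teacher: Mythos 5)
Your overall strategy coincides with the paper's: compute $\Delta_{\Sg,\psi}\pi=H_\psi\vartheta$ and $\ptl\pi/\ptl\nu=0$ to force $\Sg$ into horizontal slices when $H_\psi=0$, compare with a single slab for (i), run Hsiang symmetrization across a bisecting slice for (ii), and invoke Example~\ref{ex:cylinders} together with Lemma~\ref{lem:killing} for the nodal count. There is, however, one genuine gap in your part (i): you apply the integration-by-parts formula \eqref{eq:ibp} to $\pi$ on $\Sg$ to get $\int_\Sg|\nabla_\Sg\pi|^2\,da_\psi=0$, but that formula is established in Section~\ref{subsec:sobolev} only for \emph{compact} $\Sg$, and the regular part of $\Lambda$ need not be compact: the singular set $\Sg_0$ can be non-empty once $n\geq 7$, and then $\Sg=\Lambda\setminus\Sg_0$ is an open, non-compact hypersurface on which your computation is not justified without controlling boundary terms near $\Sg_0$. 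This is exactly the point where the paper substitutes a different tool, namely the weighted \emph{parabolicity} of $\Sg$ (from \cite[Thm.~2.2]{rosales-cylinders}): a bounded-above $C^2$ function with $\Delta_{\Sg,\psi}w\geq 0$ and $\ptl w/\ptl\nu\geq 0$ is constant, which applies directly to $\pi$ and $-\pi$. Your argument is complete only for $n\leq 6$; to cover all dimensions you either need parabolicity or an explicit capacity/cut-off argument near the codimension-$\geq 7$ singular set. Relatedly, the paper then uses the totally geodesic tangent-cone argument to conclude $\Sg_0=\emptyset$ before the slab comparison, a step you pass over somewhat loosely.

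Two smaller remarks. In part (ii), for the ``at least two nodal domains'' claim you use the flux identity $\int_\Sg\vartheta\,da_\psi=0$ from Example~\ref{ex:cylinders} combined with $\vartheta\not\equiv 0$; the paper instead argues again via parabolicity (a sign on $\Delta_{\Sg,\psi}\pi=H_\psi\vartheta$ would force $\pi$ constant and hence $\Sg$ disconnected). Your route is fine and arguably more elementary, provided you invoke the divergence theorem for sets of finite perimeter so that the singular set does not interfere. Also, when ruling out $\vartheta\equiv 0$ you should note, as the paper does, that singular points cannot occur along the vertical segments foliating $\Sg$, so that $\Sg$ would genuinely split as $\Sg_*\times\rr$ before contradicting boundedness. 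The rest of your argument, including the symmetrization and the unique continuation step $\Sg=\Sg^*$, matches the paper's proof.
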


\begin{proof}
We first observe that $\Sg$ is a weighted parabolic hypersurface, see \cite[Thm.~2.2]{rosales-cylinders} and the references therein. In particular, any bounded from above function $w\in C^2(\Sg)$ such that $\Delta_{\Sg,\psi}w\geq 0$ on $\Sg$ and $\ptl w/\ptl\nu\geq 0$ along $\ptl\Sg$ must be constant. 

Define the height function $\pi:\Om\times\rr\to\rr$ by $\pi(x,s):=s$. As the weight is horizontal we easily get $\Delta_{\Sg,\psi}\pi=H_\psi\,\vartheta$. Moreover, we have $\ptl\pi/\ptl\nu=\escpr{\xi,\nu}=0$ along $\ptl\Sg$ by the orthogonality condition between $\Sg$ and $\ptl\Om\times\rr$. All this implies that $\pi$ is constant on $\Sg$ when $H_\psi=0$. So, any connected component of $\Sg$ is contained inside a horizontal slice $\Om\times\{s\}$, which is a totally geodesic hypersurface in $\Om\times\rr$. By regularity properties of minimizers this prevents the existence of points in $\Lambda$ with an associated closed tangent cone different from a hyperplane or a half-hyperplane. Therefore, we deduce that $\Sg_0=\emptyset$ and $\Sg$ is the union of finitely many horizontal slices having the same weighted area. Since $V_\psi\big(\Om\times(-\infty,s)\big)=V_\psi\big(\Om\times(s,\infty)\big)=\infty$ there must be at least two different horizontal slices in $\Sg$. As a single cylinder $\Om\times[a,b]$ is isoperimetrically better than a finite union $\cup_{i=1}^m(\Om\times[a_i,b_i])$ enclosing the same weighted volume, we conclude that $E$ is equivalent to $\Om\times[a,b]$. This proves (i).

Suppose now that $\Sg$ is connected. By continuity, there is a horizontal slice $\Om\times\{s_0\}$ bisecting $E$. As the mirror symmetry with respect to $\Om\times\{s_0\}$ is an isometry of $\mathcal{N}\times\rr$ that preserves not only the weight but also the cylinder $\Om\times\rr$ and the boundary $\ptl\Om\times\rr$, we can apply Hsiang symmetrization as in the proof of Theorem~\ref{th:isop} to infer that $E$ is symmetric with respect to $\Om\times\{s_0\}$. On the other hand, note that $\vartheta\neq 0$ on $\Sg$. Otherwise, $\Sg$ would be foliated by vertical segments. As a singular point cannot appear along these segments we would deduce that $\Sg=\Sg_*\times\rr$, which contradicts that $E$ is bounded. If $\vartheta$ did not change sign on $\Sg$ then we would have $\Delta_{\Sg,\psi}\pi\geq 0$ or $\Delta_{\Sg,\psi}\pi\leq 0$. Hence, the weighted parabolicity of $\Sg$ would lead to the conclusion that $\Sg$ contains at least two horizontal slices, which contradicts that $\Sg$ is connected. So, $\vartheta$ has at least two nodal domains. The last assertion in the statement is a consequence of Lemma~\ref{lem:killing}, see Example~\ref{ex:cylinders} for the details.
\end{proof}

\begin{remark}[About the hypotheses on $\Sg$]
If $\ric_\psi\geq 0$ and $\Om$ is locally convex, then $\Sg$ is either connected or totally geodesic with $\ric_\psi(N,N)=0$ on $\Sg$ and $\text{II}(N,N)=0$ along $\ptl\Sg$, see \cite[Cor.~2.8]{rosales-cylinders}. When $n\leq 6$ the regularity properties of minimizers ensure that $\Sg$ is compact.
\end{remark}

\providecommand{\bysame}{\leavevmode\hbox to3em{\hrulefill}\thinspace}
\providecommand{\MR}{\relax\ifhmode\unskip\space\fi MR }
% \MRhref is called by the amsart/book/proc definition of \MR.
\providecommand{\MRhref}[2]{%
  \href{http://www.ams.org/mathscinet-getitem?mr=#1}{#2}
}
\providecommand{\href}[2]{#2}

\end{document}